\newcommand\restr[2]{{
  \left.\kern-\nulldelimiterspace 
  #1 
  \vphantom{\big|} 
  \right|_{#2} 
  }}
\def\sideremark#1{\ifvmode\leavevmode\fi\vadjust{\vbox to0pt{\vss
 \hbox to 0pt{\hskip\hsize\hskip1em
 \vbox{\hsize3cm\tiny\raggedright\pretolerance10000
 \noindent #1\hfill}\hss}\vbox to8pt{\vfil}\vss}}}
\newtheorem{theorem}{Theorem}[section]
\newtheorem{lemma}[theorem]{Lemma}
\theoremstyle{definition}
\newtheorem{definition}[theorem]{Definition}
\theoremstyle{remark}
\numberwithin{equation}{section}
\title{The Singular CR Yamabe Problem and Hausdorff Dimension}
\date{}
\author{Sagun Chanillo and Paul C. Yang\\
\textit{To Haim Br\'ezis in memoriam}}
\begin{document}

\maketitle
\begin{abstract}
In this paper we consider CR analogs of Huber's theorem for Riemann surfaces. We also investigate the developing map for CR structures that are spherical in the case of three dimensional CR manifolds and give conditions when this developing map is injective.
\end{abstract}

\section{The main theorem}

Huber \cite{Huber1957} in 1957 showed that a complete surface with finite total curvature, is conformally a closed surface with finitely many points deleted. This result has inspired a lot of extension in various geometric settings.. In conformal geometry, the celebrated work of Schoen-Yau \cite{SYdevelop}, \cite{Schoenbook} studied the developing map of a locally spherical manifold of positive Yamabe class. They showed that the developing map is one-to-one into the sphere, and the complement of the image has small Hausdorff dimension. As a consequence, the original manifold is a quotient of a Kleinian group. It is our purpose to consider further extensions to the CR setting.

We begin with a pseudo-hermitian manifold whose conformal Laplacian is a positive operator, a CR invariant notion. The CR conformal Laplacian
$$L_{\theta}= -\Delta_b + c_n R,
$$
is paired with the Webster scalar curvature in the associated curvature equation for conformal change of contact form
$\tilde \theta = u^2 \theta$,
$$
L_{\theta} u = {\tilde R} u^{Q+2/Q-2},
$$
where $Q=2n+2$ denotes the homogeneous dimension. Similar to the conformal case, the analysis of this equation becomes
non-trivial when the operator is positive. On the other hand, when the operator is positive, much is known about the geometry
and analysis of the manifold.  For example, in the case where the CR structure is locally spherical and the CR Yamabe invariant is
positive, the development map can be shown to be injective and the complement of its image is small \cite{CY}. In the following, we localize
the analysis to a neighborhood of infinity and derive an analytic result about the size at infinity, assuming the scalar curvature is largely positive, assuming suitable size control of the negative part of the scalar curvature. The strategy of our proof is to use potential theory as adapted to the CR case in the same spirit as potential theory was used in the conformal case in \cite{ma2022linear}. Higher conformal geometry versions of Huber's theorem for $Q$ curvature have also been proved in \cite{Chang2004} and \cite{ma2022linear}.

In the last section we prove a result on the injectivity of the developing map in three dimensions. Injectivity results for the developing map have been proved earlier in \cite{CY} in three dimensions. The authors there assume conditions on the minimal integrability exponent of the Green function for the conformal sub-laplacian. We will instead assume that the CR Paneitz operator is non-negative and the Yamabe constant is positive. The CR Paneitz operator is defined in \cite{Chanillo}, \cite{Malchiodi} and the Yamabe constant is defined in $(1)$ in \cite{Malchiodi} and also in \cite{Chanillo}.

\begin{theorem}
\label{theorem-1}
Let $\left(M^n, \theta, J\right)$ be a compact pseudo-hermitian manifold. Let $Q=2n+2$ be the homogeneous dimension. Let $S \subseteq M$ be a closed set. Let $D$ be a bounded open neighborhood of $S$. Let
$$
\hat{\theta}=u^{\frac{4}{Q-2}} \theta
$$
be a conformal contact form on $D\backslash S$ such that the corresponding pseudo-Hermitian metric is complete near $S$. Assume
$$
R^{-}(\hat{\theta}) \in L^{\frac{2 Q}{Q+2}}\left(D \backslash S, d V_\theta\right) \cap L^p\left(D \backslash S, d V_\theta\right)
$$
for some $p>Q / 2$, where $d V_\theta=\theta \wedge(d \theta)^n$; and $R^-
(\hat{\theta})=\max \{-R(\hat{\theta}), 0\}$, where $R(\hat{\theta})$ is the scalar curvature coming from the contact form $\hat{\theta}$. Then the Hausdorff dimension satisfies \[\dim_{\mathcal{H}}(S) \leqslant \frac{Q-2}{2}.\] The notion of Hausdorff measure and dimension is that in~\cite{bonfiglioli2007stratified}(cf. Chapter 13).
\end{theorem}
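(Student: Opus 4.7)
I would follow the potential-theoretic strategy used in the conformal case by Ma--Qing, with the role of the ambient Laplacian played by the CR sub-Laplacian $\Delta_b$ and that of the ambient dimension by the homogeneous dimension $Q$. The proof splits into three stages: an analytic upper bound on the $\hat\theta$-volume near $S$ coming from the Yamabe equation and the integrability of $R^-(\hat\theta)$; a geometric lower bound on the $\hat\theta$-volume of CR balls centered on $S$ coming from completeness of $\hat\theta$ near $S$; and a Vitali-type covering argument combining the two.

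\textbf{Analytic step.} Rewrite the Yamabe equation
$$L_\theta u = c_n R(\hat\theta)\, u^{(Q+2)/(Q-2)}$$
as the super-solution inequality $L_\theta u + c_n R^-(\hat\theta)\, u^{(Q+2)/(Q-2)} \ge 0$. Viewing $c_n R(\theta) + c_n R^-(\hat\theta)\, u^{4/(Q-2)}$ as a potential, the hypothesis $R^-(\hat\theta) \in L^p$ for some $p > Q/2$, combined with a bootstrap using the Folland--Stein Sobolev embedding $W^{1,2} \hookrightarrow L^{2Q/(Q-2)}$, places this potential in $L^{Q/2}_{\mathrm{loc}}$ and enables the subelliptic Moser iteration of Capogna--Danielli--Garofalo. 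This yields a CR Harnack inequality and local boundedness of $u$ away from $S$. Testing the equation against $u\eta^2$ for a Folland--Stein cutoff $\eta$ and applying the Hölder pairing $\frac{1}{2Q/(Q+2)} + \frac{1}{2Q/(Q-2)} = 1$ then gives a global bound
$$\int_{D_0} u^{2Q/(Q-2)}\, dV_\theta < \infty$$
on a thin neighborhood $D_0$ of $S$ with $S \subset D_0 \Subset D$; equivalently, the $\hat\theta$-volume of $D_0 \setminus S$ is finite.

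\textbf{Geometric step and covering.} The task is to show that completeness of $\hat\theta$ near $S$, i.e.\ $\int_\gamma u^{2/(Q-2)}\,ds_\theta = \infty$ along every horizontal curve $\gamma$ tending to $S$, forces the quantitative mass lower bound
$$\int_{B(x_0,r)} u^{2Q/(Q-2)}\, dV_\theta \ge c\, r^{(Q-2)/2}$$
at every $x_0 \in S$ and every small $r > 0$. Summing such bounds over a Vitali subcover of any cover $\{B(x_i, r_i)\}$ of $S$ and invoking the global integral bound from the analytic step yields $\sum_i r_i^{(Q-2)/2} \le C \int_{D_0} u^{2Q/(Q-2)}\, dV_\theta < \infty$; refining the mesh $\max_i r_i \to 0$ gives $\mathcal{H}^s(S) = 0$ for every $s > (Q-2)/2$, i.e.\ $\dim_{\mathcal{H}}(S) \le (Q-2)/2$.

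\textbf{Main obstacle.} The hardest part is the local mass lower bound in the geometric step. In the Riemannian case it is derived via radial coarea in geodesic normal coordinates, but in the sub-Riemannian CR setting horizontal curves and CR balls do not align in a clean radial way. The cleanest route is likely to construct a sub-solution barrier for $L_\theta$ built from the CR Green's function $G(x,y) \asymp d_\theta(x,y)^{2-Q}$ together with a Frostman-type measure supported on $S$, and then to invoke a subelliptic maximum principle to transfer the infinite-length condition along curves into a quantitative integral lower bound at each point of $S$. Establishing Folland--Stein Green's function asymptotics uniformly near the singular set, and reconciling them with the integrability exponents $2Q/(Q+2)$ and $p > Q/2$ from the hypothesis, is where the main technical effort resides.
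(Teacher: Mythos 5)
Your general orientation is right---Green's function estimates, Folland--Stein Sobolev, Moser iteration, and a potential-theoretic endgame---but your proposed architecture diverges from the paper's in a way that leaves a genuine gap precisely at the step you flag as the ``main obstacle.''

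The paper does \emph{not} prove a local mass lower bound $\int_{B(x_0,r)} u^{2Q/(Q-2)}\,dV_\theta \ge c\,r^{(Q-2)/2}$ at every $x_0\in S$, nor does it need finiteness of $\int u^{2Q/(Q-2)}\,dV_\theta$. Its mechanism runs in the opposite direction: it first shows (\cref{lemma-4}) that $-\Delta_b u$ extends to a finite non-negative Radon measure $\mu$ on $D$ (using that $f=-c_n\Delta_b u\in L^1(D)$, which follows from the $L^{2Q/(Q+2)}$ hypothesis on $R^-(\hat\theta)$ and a Fatou/barrier argument with a concave truncation $\alpha_s$), then uses a Hausdorff-density lemma (\cref{lemma-1}) to locate a \emph{single} point $p\in S$ at which $\mu(B_r(p))\le Cr^{d+\varepsilon}$, and finally proves (\cref{lemma-7}) via an annular decomposition and a capacity/thin-set argument (Lemmas~\ref{lemma-5},~\ref{lemma-6}) that the Riesz potential $\int_D \rho(x,y)^{-(Q-2)}\,d\mu(y)$ is bounded by $C\rho(x,p)^{-(Q-2-d)}$ for $x$ near $p$ outside a $2$-thin set. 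Combined with the Green's representation $u\le R_\mu^{2,D}+h$, this gives a \emph{pointwise upper bound} on $u$ near $p$; if $d>(Q-2)/2$ the exponent $\tau=2(Q-2-d)/(Q-2)$ is $<1$, so the $\hat\theta$-length to $p$ (along a curve dodging the thin set) is finite, contradicting completeness. In short, the paper argues by contradiction from a density-controlled point, not by a universal local lower bound.

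Your geometric-step claim is the sticking point. Completeness says only that \emph{every} horizontal curve to $S$ has infinite $\hat\theta$-length, $\int_\gamma u^{2/(Q-2)}\,ds_\theta=\infty$; converting this into a quantitative volume lower bound $\mu(B(x_0,r))\ge c\,r^{(Q-2)/2}$ at \emph{every} $x_0\in S$ is substantially stronger than what completeness gives. One can imagine $u$ concentrated in a thin, dense family of directions so that all curves hit large values while the bulk volume stays small; even in the radial model $u\sim\rho^{-1}/\log(1/\rho)$ at $Q=4$ the $\hat\theta$-ball volume is finite (of order $(\log 1/r)^{-3}$), and a uniform power lower bound is not obvious. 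The paper sidesteps this entirely by never asserting such a bound. Beyond this, you would also need the intermediate capacity step the paper uses (\cref{lemma-3}, giving $\dim_{\mathcal H}S\le Q-2$ first via vanishing $2$-capacity) as a warm-up so that $\mathcal H_Q(S)=0$ and \cref{lemma-1} is applicable; this is where the hypothesis $R^-(\hat\theta)\in L^{2Q/(Q+2)}$ is actually deployed, not merely to secure an $L^1$ bound on $u^{2Q/(Q-2)}$. If you want to rescue your approach, you would need to supply the local lower bound with a proof, and as it stands that is the missing idea.
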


\noindent
\textbf{Remark:}
\noindent
In a recent paper, Guidi-Maalaoui-Martino \cite{GMM} construct contact forms of constant positive scalar curvature on $S^{2n+1}\setminus S^{2k+1}$ if $k< (n-2)/2$. This is a partial converse to our result.

\begin{lemma}
\label{lemma-1}
Let $\mu$ be a non-negative Radon measure on a complete pseudo- Hermitian manifold $(M, \theta, J)$. Assume $\mu$ is absolutely continuous with respect to the measure $\theta \wedge (d\theta)^n$. Define,
$$
G_d^{\infty}=\left\{x \in M: \overline{\lim} _{r \rightarrow 0} \mu\left(B_r(x)\right) r^{-d}=+\infty\right\}
$$
for any $d \in[0, Q)$. Then,
$$
\mathcal{H}_d\left(G_d^{\infty}\right)=0 \text {, }
$$
where $\mathcal{H}_d$ is the Hausdorff measure of dimension $d$.
\end{lemma}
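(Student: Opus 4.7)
\medskip
\noindent\textbf{Proof plan for Lemma \ref{lemma-1}.}

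The plan is to run the classical ``differentiation of measures'' argument, adapted to the Carnot--Carath\'eodory setting. For each $t>0$ set
\[
G_d^{t}=\Bigl\{x\in M:\ \varlimsup_{r\to 0}\frac{\mu(B_r(x))}{r^d}>t\Bigr\},
\]
so that $G_d^{\infty}\subseteq\bigcap_{t>0}G_d^{t}$. I will show that for every compact $K\subseteq M$ and every open neighbourhood $U\supseteq K$,
\[
\mathcal{H}_d(G_d^{t}\cap K)\ \le\ \frac{C}{t}\,\mu(U),
\]
with a constant $C$ depending only on the local doubling constant of the pseudo-Hermitian volume near $K$. Taking $U$ a relatively compact neighbourhood of $K$ (so $\mu(U)<\infty$ by the Radon property) and letting $t\to\infty$ gives $\mathcal{H}_d(G_d^{\infty}\cap K)=0$, and then a countable exhaustion of $M$ by compact sets finishes the proof.

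To produce the estimate above, fix $\delta>0$ and for every $x\in G_d^{t}\cap K$ pick a Carnot--Carath\'eodory ball $B_{r_x}(x)\subseteq U$ with $r_x<\delta$ and $\mu(B_{r_x}(x))>t\,r_x^{d}$; the definition of $G_d^{t}$ guarantees an abundance of such radii. The family $\mathcal{F}=\{B_{r_x}(x)\}_{x\in G_d^{t}\cap K}$ covers $G_d^{t}\cap K$ by balls of uniformly bounded radius, so a $5r$-covering lemma (Vitali) in the doubling metric space $(M,d_{\mathrm{cc}})$ produces a pairwise disjoint subfamily $\{B_{r_i}(x_i)\}_{i}$ with $G_d^{t}\cap K\subseteq\bigcup_i B_{5r_i}(x_i)$. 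Then
\[
\mathcal{H}_d^{5\delta}(G_d^{t}\cap K)\ \le\ \sum_i (10 r_i)^d\ \le\ \frac{10^d}{t}\sum_i\mu(B_{r_i}(x_i))\ \le\ \frac{10^d}{t}\,\mu(U),
\]
using pairwise disjointness and $B_{r_i}(x_i)\subseteq U$. Letting $\delta\to 0$ yields the claimed bound.

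The only substantive point, and where I would spend the most care, is justifying the Vitali $5r$-covering in the pseudo-Hermitian setting: this reduces to the local doubling property of $\theta\wedge(d\theta)^n$ with respect to the sub-Riemannian balls, which is standard on Carnot--Carath\'eodory spaces (see \cite{bonfiglioli2007stratified}), and to the ability to choose the $r_x$ uniformly small, which is automatic from the $\varlimsup$ condition. Absolute continuity of $\mu$ with respect to $\theta\wedge(d\theta)^n$ is not actually used in the covering step, but it enters through the finiteness $\mu(U)<\infty$ on relatively compact neighbourhoods, ensuring the right-hand side is finite so the $t\to\infty$ limit delivers a null set rather than an indeterminate bound.
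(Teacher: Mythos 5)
Your proposal is correct and rests on the same covering estimate that the paper uses (a $5r$-covering lemma applied to small balls satisfying $\mu(B_r(x))>t\,r^d$, then summing over the disjoint subfamily to bound the Hausdorff pre-measure by $C\,\mu(U)/t$), so the mechanical heart of the two arguments is identical. The endgame differs, and I would flag the difference because it matters for what is being used and what is being proved. You send $t\to\infty$ directly: for every $t$ the estimate $\mathcal{H}_d(G_d^\infty\cap K)\le \mathcal{H}_d(G_d^t\cap K)\le C\mu(U)/t$ holds with $\mu(U)<\infty$, and since $G_d^\infty\subseteq G_d^t$ for every $t$, the left side is forced to be zero. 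The paper instead fixes the threshold $\lambda=1/m$, deduces $\mathcal{H}_d(A_m)<\infty$, invokes $d<Q$ to conclude $\mathcal{H}_Q(A_m)=0$, i.e.\ $A_m$ is volume-null, then uses absolute continuity of $\mu$ to upgrade to $\mu(A_m)=0$, and finally feeds this back into the covering estimate to get $\mathcal{H}_d(A_m)=0$. That chain actually proves a \emph{stronger} statement than the lemma asserts: the set of $x$ with $\varlimsup_{r\to 0}\mu(B_r(x))r^{-d}>0$ (not merely $=+\infty$) is $\mathcal{H}_d$-null. For the lemma as stated, your route is cleaner and dispenses with absolute continuity entirely.

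Two small corrections to your commentary, neither affecting the validity of your proof. First, you say absolute continuity ``enters through the finiteness $\mu(U)<\infty$''; this is not quite right: local finiteness on relatively compact sets is part of what it means for $\mu$ to be a Radon measure and has nothing to do with absolute continuity, so your proof genuinely uses AC nowhere. (It is the paper's route, not yours, that needs it.) Second, the basic $5r$-covering lemma you invoke is a purely metric statement (it needs only bounded radii and, for countability of the disjoint subfamily, separability of $M$, which holds on any manifold); the doubling property of the pseudo-Hermitian volume is not required at that step, though it is of course true and convenient in the Carnot--Carath\'eodory setting.
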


\begin{proof}
We adapt a proof from the book by W. Ziemer ~\cite{ziemer}. Assume for every $x \in A$, where $A$ is some Borel set, we have $$
\overline{\lim }_{r \rightarrow 0}  \frac{\mu\left(B_r(x)\right)}{r^d}>\lambda>0
$$
Then $\exists C=C(d)$; such that
\begin{equation}
\label{eq:1}
\mu(A) \geq C \lambda \mathcal{H}_d(A)
\end{equation}
where $\mathcal{H}_d(A)$ is the $d$-dimensional Hausdorff measure of $A$.
We may assume $\mu(A)<+\infty$ or else there is nothing to prove. Choose $\varepsilon>0$. Let us choose $U$ open, $U\supset A$, $\mu(U)<+\infty$, which we can as $\mu$ is a Radon measure, and $A$ a Borel set. Let $\mathcal{G}$ be a family of closed balls $B_r(x) \subset U$ such that for $x \in A$
$$
 \frac{\mu(B_r(x))}{r^d}>\lambda, 0<r<\varepsilon / 2 .
$$
We may extract from $\mathcal{G}$ a disjoint sub-family $\mathcal{F}\subset \mathcal{G}$ such that if $ \mathcal{F}^*$ is a finite sub-family of $\mathcal{F}$, one has
\[A \subseteq \cup\left\{B, B \in \mathcal{F}^*\right\} \cup\left\{\widehat{B}: \widehat{B} \in \mathcal{F} \backslash \mathcal{F}^*\right\}\]
where $\hat{B}$ is a ball concentric with $B$, and radius 5 times that of $B$. This is proved as Cor. 1.3.3 ~\cite{ziemer} and the proof is unchanged for us. Thus by definition
$$
\mathcal{H}_d^{5 \varepsilon}(A) \leqslant \sum_{B \in \mathcal{F}^*}\left(\frac{\delta(B)}{2}\right)^d+C 5^d \sum_{B \in \mathcal{F} \backslash \mathcal{F}^*}\left(\frac{\delta(B)}{2}\right)^d
$$
where $\delta(B)$ is the diameter of $B$. Since $\mathcal{F}\subset \mathcal{G}$ and $\mathcal{F}$ is disjoint, we have
$$
\begin{aligned}
\sum_{B \in \mathcal{F}}\left(\frac{\delta(B)}{2}\right)^d & \leqslant C \lambda^{-1} \sum_{B \in \mathcal{F}} \mu(B) \\
& \leqslant C \lambda^{-1} \mu(U)<+\infty .
\end{aligned}
$$
Since the remainder $$
C 5^d \sum_{B \in \mathcal{F} \backslash \mathcal{F}^*}\left(\frac{\delta(B)}{2}\right)^d
$$
can be made arbitrarily small, we get
$$
\mathcal{H}_d^{5 \varepsilon}(A) \leqslant C \lambda^{-1} \mu(U) .
$$
By the definition of Hausdorff measure we obtain~\cref{eq:1}.

For some fixed point $x_0 \in M$, define for $\rho(x,x_0)$ the pseudo-hermitian metric,
$$
A_m=\left\{x: \rho\left(x, x_0\right)<m,\overline{ \lim} _{r \rightarrow 0}  \frac{\mu(B(x, r))}{r^d}>\frac{1}{m}\right\} .
$$ Then $A_m$ is bounded and so $\mu\left(A_m\right)<+\infty$.
Using \cref{eq:1}
$$
\mu\left(A_m\right) \geqslant \frac{c}{m} \mathcal{H}_d\left(A_m\right) .
$$
Thus, $\mathcal{H}_d\left(A_m\right)<+\infty$, and we conclude that $\mathcal{H}_Q\left(A_m\right)=0$, since $d<Q$, that is the $\theta \wedge(\theta)^n$ measure of $A_m$ is zero. As $\mu$ is a Radon measure, the absolute continuity of $\mu$, with respect to $ \theta \wedge(d \theta)^n$, implies $\mu\left(A_m\right)=0$. Hence, by \cref{eq:1} again, we have $\mathcal{H}_d\left(A_m\right)=0$.
Since
$$
A=\cup A_m
$$
we conclude $\mathcal{H}_d\left(A\right)=0$. Our lemma follows by setting $A=G_d^{\infty}$.
\end{proof}
We next prove an intermediate result on the Hausdorff dimension preparatory to proving our main theorem. To do so, and for the main theorem, we need:
\begin{lemma}
\label{lemma-2}
Let $(M, \theta, J)$ be a compact pseudo-hermitian manifold. Assume the background scalar curvature satisfies $R(\theta) \leqslant 0$. Let $S \subset M$ be a closed set and $D$ an open neighborhood of $S$, which is small, i.e. $\operatorname{vol}(D)<\varepsilon$. Let $\hat{\theta}=u^{\frac{4}{Q-2}} \theta$
be a conformal contact form in $D \backslash S$. Assume $R^{-}(\hat{\theta}) \in L^p\left(D \backslash S, d V_{\hat{\theta}}\right), p>\frac{Q}{2}$ and $R^{-}(\hat{\theta})=\max \{-R(\hat{\theta}), 0\}$. Then if the pseudo-hermitian manifold $(M, \hat{\theta}, J)$ is complete, we have
$$
u(x) \rightarrow+\infty \text {, as } x \rightarrow S .
$$
\end{lemma}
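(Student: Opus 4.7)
The plan is to pass to $v = 1/u$ in the $\hat{\theta}$-geometry, where $v$ becomes a positive weak subsolution of a linear equation whose potential lies in $L^p$, and then contradict the existence of a sequence $x_k \to x_0 \in S$ with $v(x_k) \ge c_0 > 0$ using a CR Moser $L^\infty$ bound combined with the conformally invariant identity $v^{2Q/(Q-2)}\,dV_{\hat{\theta}} = dV_{\theta}$.

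Under the conformal change $\hat{\theta} = u^{4/(Q-2)}\theta$, the CR Yamabe equation reads
\[
-\Delta_b u + c_n R(\theta)\,u = c_n R(\hat{\theta})\,u^{(Q+2)/(Q-2)},
\]
and the reverse change $\theta = v^{4/(Q-2)}\hat{\theta}$ with $v = 1/u$ gives its dual
\[
-\Delta_{\hat{\theta}}v + c_n R(\hat{\theta})\,v = c_n R(\theta)\,v^{(Q+2)/(Q-2)}.
\]
Since $R(\theta) \le 0$ and $-R(\hat{\theta}) \le R^-(\hat{\theta})$, this simplifies to the linear subsolution inequality
\[
-\Delta_{\hat{\theta}}v \le c_n R^-(\hat{\theta})\,v \quad\text{on } D\setminus S,
\]
with potential $W := c_n R^-(\hat{\theta}) \in L^p(D\setminus S, dV_{\hat{\theta}})$ and $p > Q/2$.

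Arguing by contradiction, suppose $u$ does not tend to $+\infty$ at some $x_0 \in S$; then there exist $x_k \to x_0$ in the $\theta$-topology and $c_0 > 0$ with $v(x_k) \ge c_0$. Completeness of $(D\setminus S, \hat{\theta})$ keeps $\hat{\theta}$-balls inside $D\setminus S$ and, combined with local compactness, forces $d_{\hat{\theta}}(x_k, y) \to \infty$ for every fixed $y\in D\setminus S$, so the balls $B^{\hat{\theta}}_r(x_k)$ of any fixed radius $r > 0$ drift into the $\hat{\theta}$-tail as $k\to\infty$. The conformal identity then yields
\[
\int_{B^{\hat{\theta}}_r(x_k)} v^{2Q/(Q-2)}\,dV_{\hat{\theta}} = V_\theta\bigl(B^{\hat{\theta}}_r(x_k)\bigr) \le V_\theta(D) < \varepsilon,
\]
hence $\|v\|_{L^{2Q/(Q-2)}(B^{\hat{\theta}}_r(x_k),\,dV_{\hat{\theta}})} \le \varepsilon^{(Q-2)/(2Q)}$, while $\|W\|_{L^p(B^{\hat{\theta}}_r(x_k),\,dV_{\hat{\theta}})} \le \|R^-\|_{L^p(D\setminus S,\,dV_{\hat{\theta}})}$ remains bounded uniformly in $k$. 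Applying the CR/sub-Riemannian De Giorgi--Nash--Moser $L^\infty$ bound for subsolutions of $-\Delta_{\hat{\theta}}v \le Wv$ with $W \in L^p$, $p > Q/2$ (available through the techniques of Jerison and of Capogna--Danielli--Garofalo), started at the Sobolev exponent $q = 2Q/(Q-2)$, then gives
\[
\sup_{B^{\hat{\theta}}_{r/2}(x_k)} v \le C_0\,\|v\|_{L^{2Q/(Q-2)}(B^{\hat{\theta}}_r(x_k),\,dV_{\hat{\theta}})} \le C_0\,\varepsilon^{(Q-2)/(2Q)},
\]
with $C_0$ depending on $\|R^-\|_{L^p}$, $r$, and $Q$ but not on $k$. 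Choosing $\varepsilon$ small enough that $C_0\,\varepsilon^{(Q-2)/(2Q)} < c_0$ contradicts $v(x_k) \ge c_0$, completing the argument.

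The principal obstacle is ensuring the Moser $L^\infty$ estimate holds uniformly on the drifting balls $B^{\hat{\theta}}_r(x_k)$: one needs doubling, Sobolev--Poincar\'e, and $L^p$-potential constants that do not degenerate as $x_k \to S$ in $\theta$ (equivalently, as these balls escape to $\hat{\theta}$-infinity). This typically forces picking $r$ small enough that each $B^{\hat{\theta}}_r(x_k)$ sits in a quasi-standard CR chart, and the uniform-in-$k$ control of the CR geometric constants together with the nonstandard choice to initialize Moser at the Sobolev exponent rather than at $L^2$ is the delicate technical heart of the proof.
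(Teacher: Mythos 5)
Your route is the same as the paper's in outline: pass to $v=1/u$, write the dual equation in the $\hat\theta$-geometry, reduce to a linear subsolution inequality with potential $R^-(\hat\theta)\in L^p$, run a Moser $L^\infty$ bound, and feed in the conformally invariant identity $v^{2Q/(Q-2)}\,dV_{\hat\theta}=dV_\theta$. However, the endgame contains a logical misstep. You bound
$\int_{B_r^{\hat\theta}(x_k)} v^{2Q/(Q-2)}\,dV_{\hat\theta}=V_\theta\bigl(B_r^{\hat\theta}(x_k)\bigr)\le V_\theta(D)<\varepsilon$
and then ``choose $\varepsilon$ small enough that $C_0\varepsilon^{(Q-2)/(2Q)}<c_0$.'' But $\varepsilon$ is fixed in the hypothesis of the lemma, while $c_0$ appears only once you assume, for contradiction, a sequence with $v(x_k)\ge c_0$; you cannot pick $\varepsilon$ depending on a constant that is produced by the contradiction hypothesis. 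The correct claim, and what the paper proves, is that $V_\theta\bigl(B_r^{\hat\theta}(x_k)\bigr)\to 0$ as $k\to\infty$, not merely that it is below $\varepsilon$. You already have the ingredient: completeness near $S$ forces $d_{\hat\theta}(x_k,y)\to\infty$ for each fixed $y\in D\setminus S$, so $B_r^{\hat\theta}(x_k)$ eventually leaves every fixed compact subset of $D\setminus S$, and since $V_\theta(D)<\infty$ the $\theta$-volumes of these balls tend to zero. Then $\sup_{B_{r/2}^{\hat\theta}(x_k)}v\to 0$, contradicting $v(x_k)\ge c_0$ for any $c_0$, and no $\varepsilon$-tuning is needed.

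You also correctly flag the uniformity of the Moser constants as the balls drift to $\hat\theta$-infinity, but your proposed fix (shrink $r$ so each ball fits into a ``quasi-standard CR chart'') is not the right mechanism: fixed-radius $\hat\theta$-balls near $S$ need not fit in charts, and the $\hat\theta$-geometry there is precisely what is uncontrolled. What actually makes the argument go through in the paper is the conformal invariance of the CR Sobolev inequality: one first establishes, for $f\in C_0^\infty(D\setminus S)$, the inequality with the compact manifold's $\theta$-Sobolev constant, absorbing the $\int R(\theta)|f|^2$ term using $\operatorname{vol}(D)<\varepsilon$ --- this is the one and only place the smallness of $D$ is used --- and then carries it over to $\hat\theta$ by conformal invariance, so that under the normalization $\int_{B_1^{\hat\theta}(x_0)}dV_{\hat\theta}=1$ the Moser estimate holds with constants independent of $x_0$. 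Without identifying this step, the uniformity issue you name remains open in your proposal.
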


\noindent
\textbf{Remark:} The hypothesis $R(\theta) \leqslant 0$, can be dropped. However, in our application, we can arrange $R(\theta) \leqslant 0$ by taking connected sums of $M$ with manifolds with very negative Yamabe constants, so we assume this as it simplifies the proof. We use the Moser iteration argument in ~\cite{Chang2004} and ideas in~\cite{ma2022linear}.
\begin{proof}
We first observe for $f \in C_0^{\infty} (D\backslash S)$, we have
\begin{equation}
\label{eq:2.1}
\left(\int_{D \backslash S}|f|^{\frac{2 Q}{Q-2}} d V_{\hat{\theta}}\right)^{\frac{Q-2}{ Q}} \leqslant c\left\{\int_{D \backslash S}\left(\left|\nabla^{\hat{\theta}}_b f\right|^2+R(\hat{\theta})|f|^2\right) d V_{\hat{\theta}}\right\}
\end{equation} for some $c>0$.
The expression above is a conformal invariant and so we may replace $\hat{\theta}$ by $\theta$ and we simply show \cref{eq:2.1} for $\theta$. By the Sobolev inequality
\begin{equation}
\label{eq:2.2}
\left(\int_{D \backslash S}|f|^{\frac{2 Q}{Q \cdot 2}} d V_\theta\right)^{\frac{Q-2}{Q}} \leqslant c\int_{D \backslash S}\left|\nabla_b^\theta f\right|^2 d V_\theta
\end{equation}
We write the right side as
$$
\int_{D \backslash S}\left(\left|\nabla_b^\theta f\right|^2+R(\theta) | f|^2\right) d V_\theta-\int_{D \backslash S} R(\theta)|f|^2 d V_\theta,
$$
where
$$
\begin{aligned}
\left(\int_{D \backslash S} R(\theta)|f|^2 d V_\theta\right) &\leqslant \left(\int_{D \backslash S}|f|^{\frac{2 Q}{Q-2}} d V_\theta\right)^{\frac{Q-2}{Q}} \left(\int_{D \backslash S}|R|^{\frac{Q}{2}} d V_\theta\right)^{2 / Q} \\
& \leqslant \varepsilon\left(\int_{D \backslash S}|f|^{\frac{2 Q}{Q-2}} d V_\theta\right)^{\frac{Q-2}{Q}}.
\end{aligned}
$$
Thus from \cref{eq:2.2}, we get
$$
\begin{aligned}
& \left(\int_{D \backslash S}|f|^{\frac{2 Q}{Q-2}} d V_\theta\right)^{\frac{Q-2}{Q}} \leqslant c \int_{D\backslash S}\left(\left|\nabla_b^\theta f\right|^2+R|f|^2\right) d V_\theta +\varepsilon\left(\int_{D\backslash S}|f|^{\frac{2 Q}{Q-2}}\,dV_\theta\right)^{\frac{Q-2}{Q}} \\
&
\end{aligned}
$$ From the last inequality \cref{eq:2.1} follows.
Next write $\hat{\theta}=e^{\frac{(Q-2)}{2} w} \theta$. We re-write this as $\theta=e^{-\frac{(Q-2)} {2}w} \hat{\theta}=v^{\frac{4}{Q-2}} \hat{\theta}$. Thus,
$$
-c_n \Delta_b^{\hat{\theta}} v+R(\hat{\theta}) v=R(\theta) v^{\frac{Q-2}{\hat{Q}+2}} \leqslant 0 \text {. }
$$
This gives
\begin{equation}
\label{eq:2.3}
-c_u \Delta_b^{\hat{\theta}} v+R^{+}(\hat{\theta}) v \leqslant R^{-}(\hat{\theta}) v
\end{equation}
Our goal is to apply the Moser iteration method to \cref{eq:2.3} with \cref{eq:2.1}, and conclude a local $L^{\infty}$ bound on $v$, where $v^{\frac{4}{Q-2}}=e^{-\frac{Q-2}{2} w}$.
The $L^{\infty}$ bound we want to prove is
\begin{equation}
\label{eq:2.4}
\|v\|_{L^{\infty}(B_{1/2}^{\hat{\theta}}\left(x_0\right))}\leqslant c\left({\int}_{{B}_1^{\hat{\theta}}}|v|^{\frac{2 Q}{Q-2}} d V_{\hat{\theta}}\right)^{\frac{Q-2}{2 Q}}
\end{equation}
with the normalization
\begin{equation}
\label{eq:2.5}
\int_{B_1^{\hat{\theta}}\left(x_0\right)} d V_{\hat{\theta}}=1.
\end{equation}
Here $B_1^{\hat{\theta}}$ is a unit ball in the metric of $\hat{\theta}$. Now
$$
\begin{aligned}
|v|^{\frac{2 Q}{Q-2}}d V_{\hat{\theta}} & =e^{-\frac{(Q-2)^2}{8} \cdot \frac{2 Q}{Q-2} w} e^{\frac{(Q-2)}{2} \cdot \frac{Q}{2} w} \theta \wedge(d \theta)^n \\
& =\theta\wedge (d \theta)^n=d V_\theta .
\end{aligned}
$$
So from \cref{eq:2.4}
\begin{equation}
\label{eq:2.6}
\|v\|_{L^{\infty}\left(B_{1/2} ^{\hat{\theta}}(x_0)\right)} \leqslant c\left(\int_{B_1^{\hat{\theta}}\left(x_0\right)} d V_\theta\right)^{\frac{Q-2}{2 Q}}.
\end{equation}
But $\hat{\theta}$ gives rise to a complete metric. So if $x_0 \rightarrow S$, the volume of the ball $B_1^{\hat{\theta}}(x_0)$, as measured in the background metric $\theta$, must go to zero. So as $x \rightarrow S$, the right hand side of \cref{eq:2.6} must go to zero. Thus $v(x) \rightarrow 0$ as $x \rightarrow S$. This means $w(x) \rightarrow+\infty$ as $x \rightarrow S$. This proves our lemma if \cref{eq:2.4} holds. To prove \cref{eq:2.4}, we multiply \cref{eq:2.3} by $\eta^2 v^\beta$ where $0 \leqslant \eta \leqslant 1$, is a cutoff function supported in an open set $U \subset D \backslash S$, which we can ultimately take to be a unit ball.
Then we integrate with respect to $d V_{\hat{\theta}}$ with the normalization \cref{eq:2.5}. We get after integration by parts, and Cauchy-Schwartz:
\begin{align}
& c_n \beta \int_U \eta^2\left|\nabla_b^\theta v\right|^2 v^{\beta-1} d V_{\hat{\theta}}+\int_U R^{+}(\hat{\theta}) \eta^2 v^{\beta+1} d V_{\hat{\theta}}\nonumber \\
& \leqslant \int_U R^- (\hat{\theta})\eta^2 v^{\beta+1} d V_{\hat{\theta}}+\frac{c}{\beta} \int_U {|\nabla \eta|^2 v^{\beta+1} d V_{\hat{\theta}}} \nonumber
\end{align}
This yields,
\begin{multline}
\label{eq:2.7}
\frac{4 c_n \beta}{(\beta+1)^2} \int_U \eta^2\left|\nabla_b^{\hat{\theta}} v^{\frac{\beta+1}{2}}\right|^2 d V_{\hat{\theta}}+\int_U R^{+}(\hat{\theta}) \eta^2 v^{\beta+1} d V_{\hat{\theta}} \\
\leqslant \int R^{-}(\hat{\theta}) \eta^2 v^{\beta+1} d V_{\hat{\theta}} +\frac{c}{\beta} \int_U \left|\nabla\eta\right|^2 v^{\beta+1}d V_{\hat{\theta}}.
\end{multline}
Now
$$
\nabla_b^{\hat{\theta}}\left(\eta v^{\frac{\beta+1}{2}}\right)=\eta \nabla_b^{\hat{\theta}} v^{\frac{\beta+1}{2}}+v^{\frac{\beta+1}{2}} \nabla_b^{\hat{\theta}} \eta.
$$
Using above in \cref{eq:2.7} we get after rearranging terms,
$$
\begin{aligned}
& \frac{c_{n} \beta}{(\beta+1)^2} \int_U\left|\nabla_b^{\hat{\theta}}\left(\eta v^{\frac{\beta+1}{2}}\right)\right|^2dV_{\hat{\theta}}+\int_U R^{+}(\hat{\theta}) \eta^2 v^{{\beta+1}} d V_{\hat{\theta}} \\
& \leqslant \int_U \operatorname{R}^-(\hat{\theta}) \eta^2 v^{\beta+1} d V_{\hat{\theta}}+\frac{c \beta}{(\beta+1)^2} \int_u\left|\nabla_b^{\hat{\theta}} \eta\right|^2 v^{\beta+1} d V_{\hat{\theta}}
\end{aligned} $$

Multiplying by $\frac{(\beta+1)^2}{\beta}$, we get using $\beta\geq 1$,
$$ \begin{aligned}
&\int_U\left|\nabla_b^{\hat{\theta}}\left(\eta v^{\frac{\beta+1}{2}}\right)\right|^2 d V_{\hat{\theta}}+\frac{(\beta+1)^2}{\beta^2} \int_U R^{+}(\hat{\theta}) \eta^2 v^{\beta+1} d V_{\hat{\theta}}\leqslant \\
&
\end{aligned}
$$

\begin{equation}
\label{eq:2.8}
\frac{(\beta+1)^2}{\beta} \int_U R^{-}(\hat{\theta}) \eta^2 v^{\beta+1} d V_{\hat{\theta}}+\frac{(\beta+1)^2}{\beta^2} \int_U\left|\nabla_b^{\hat{\theta}} \eta\right|^2 v^{\beta+1} d V_{\hat{\theta}}
\end{equation}
Since
\begin{multline*}
\int_U\left|\nabla_b^{\hat{\theta}}\left(\eta v^{\frac{\beta+1}{2}}\right)\right|^2 d V_{\hat{\theta}}+\int_U R(\hat{\theta}) \eta^2 v^{\beta+1} d V_{\hat{\theta}} \\
\leqslant \int_U\left|\nabla_b^{\hat{\theta}}\left(\eta v^{\frac{\beta+1}{2}}\right)\right|^2 d V_{\hat{\theta}} + \int_U{R^+(\hat{\theta})\eta^2 v^{\beta+1}} d V_{\hat{\theta}},
\end{multline*}
on applying \cref{eq:2.1} to \cref{eq:2.8}, we have
\begin{align}
\left(\int_U\left|\eta v^{\frac{\beta+1}{2}}\right|^{\frac{2 Q}{Q-2}} d V_{\hat{\theta}}\right)^{\frac{Q-2}{Q}} \leqslant c & \int_U\left|\nabla_b ^{\hat{\theta}}\eta\right|^2 v^{\beta+1} d V_{\hat{\theta}} \nonumber\\
& +\frac{c(\beta+1)^2}{\beta} \int_U R^-(\hat{\theta}) \eta^2 v^{\beta+1} d V_{\hat{\theta}}\label{eq:2.9} .
\end{align}
We need a gain in $L^p$ norms to iterate via Moser, and for this we now use our hypothesis that $R^-(\hat{\theta}) \in L^p\left(D \backslash S, d V_{\hat{\theta}}\right)$, for some $p>Q / 2$. Using H\"older on the second term on the right in \cref{eq:2.9}
$$
\leqslant c \beta\left(\int_U R^{-}(\hat{\theta})^p d V_{\hat{\theta}}\right)^{\frac{1}{p}}\left(\int_U\left(\eta^2 v^{\beta+1}\right)^q d V_{\hat{\theta}}\right)^{1 / q}.
$$
Since $p>\frac{Q}{2}$, and $\frac{1}{p}+\frac{1}{q}=1$, it
follows that $q<\frac{Q}{Q-2}$. Thus by hypothesis, we have
\begin{align}
\left[\int_U\left(\eta v^{\frac{\beta+1}{2}}\right)^{\frac{2 Q}{Q-2}}\right]^{\frac{Q-2}{Q}} & \leqslant c \beta\left[\int_U\left(\eta v^{\frac{\beta+1}{2}}\right)^{2 q} d V_{\hat{\theta}}\right]^{\frac{1}{q}}\nonumber \\
& +\int_U\left|\nabla_b ^{\hat{\theta}} \eta\right|^2 v^{\beta+1} d V_{\hat{\theta}}.\label{eq:2.10}
\end{align}
Now by H\"older's inequality and the normalization in \cref{eq:2.5},
$$
\int_U\left|\nabla_b^{\hat{\theta}} \eta\right|^2 v^{\beta+1} d V_{\hat{\theta}} \leqslant\left[\int_U\left(\left|\nabla_b^{\hat{\theta}} \eta\right| v^{\frac{\beta+1}{2}}\right)^{2 q} d V_{\hat{\theta}}\right]^{1 / q} \text {. }
$$
Using the above in \cref{eq:2.10} and taking the square root we obtain
$$
\left[\int_U\left(\eta v^{\frac{\beta+1}{2}}\right)^{\frac{2 Q}{Q-2}}\right]^{\frac{Q-2}{2 Q}} \leqslant c \beta^{1 / 2}\left[\int_U\left[\left(\eta+\left|\nabla_b^{\hat{\theta}} \eta\right|\right) v^{\frac{\beta+1}{2}}\right]^{2 q} d V_{\hat{\theta}}\right]^{\frac{1}{2q}}
$$
with $2 q<\frac{2 Q}{Q-2}$. The gain in exponent allows the iteration to proceed to conclude \cref{eq:2.4}.
\end{proof}

We use the previous lemma to prove:
\begin{lemma}
\label{lemma-3}
Let $(M, \theta, J)$ be a compact pseudo-hermitian manifold with scalar curvature $R \leqslant 0$.
Let $S \subseteq M$ be a closed set and $D$ an open neighborhood of $S$. Let $\hat{\theta}=u^{\frac{4}{Q-2}} \theta$ be geodesically complete in $D\backslash S$. Assume that
$$
\begin{aligned}
& R^{-}(\hat{\theta}) \in L^{\frac{2Q}{Q+2}}\left(D \backslash S, d V_{\hat{\theta}}\right) \cap L^p\left(D \backslash S, d V_\theta\right) \\
& \text { for some } p>\frac{Q}{2} ; R^{-}(\hat{\theta})=\max \{-R(\hat{\theta}), 0\} \text {. }
\end{aligned}
$$
Then, the 2 -capacity of $S$ is zero and the Hausdorff dimension of $S$ satisfies
$$
\operatorname{dim}_{\mathcal{H}}(S) \leqslant Q-2 \text {. }
$$
Here the Hausdorff dimension and capacity is in the sense of~\cite{bonfiglioli2007stratified} (also see~\cite{Costea}).
\end{lemma}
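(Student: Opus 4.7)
The strategy is to exploit the blow-up $u(x)\to\infty$ as $x\to S$ — supplied by Lemma~\ref{lemma-2} applied to a shrunken subneighborhood of small volume, or equivalently by the geodesic completeness of $\hat\theta$ together with compactness of $(M,\theta)$ — and then build a logarithmic capacity cutoff whose Dirichlet energy tends to zero. Set $v:=1/u$, so $v\to 0$ at $S$ and $v$ is bounded on any $D'\Subset D$ containing $S$. Since $R(\theta)\leq 0$, the CR Yamabe equation $-c_n\Delta_b u + R(\theta)u = R(\hat\theta)u^{(Q+2)/(Q-2)}$ immediately yields the weak sub-equation
\[
c_n\Delta_b u \;\leq\; R^-(\hat\theta)\,u^{(Q+2)/(Q-2)} \qquad \text{on } D\setminus S.
\]

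The analytic core is the bound $\int_{D'}|\nabla_b\log u|^2\,dV_\theta<\infty$. I would test the sub-equation against the Lipschitz function $\phi=\eta^2(v-\delta)_+$, where $\eta\in C_c^\infty(D)$ equals $1$ on $D'$ and $\delta>0$ is small. Since $\phi$ is supported in $\{v\geq\delta\}\cap\operatorname{supp}\eta$, which is compactly contained in $D\setminus S$, the weak formulation applies without boundary contributions. Using the identities $v\nabla_b u=\nabla_b\log u$ and $\nabla_b u\cdot\nabla_b v=-|\nabla_b\log u|^2$, a Cauchy–Schwarz absorption of the $\eta\nabla_b\eta$ cross-term, and the conformal change-of-measure $R^-(\hat\theta)\,u^{4/(Q-2)}\,dV_\theta = R^-(\hat\theta)\,v^2\,dV_{\hat\theta}$, one obtains
\[
\tfrac{c_n}{2}\int\eta^2|\nabla_b\log u|^2\,\mathbf{1}_{\{v>\delta\}}\,dV_\theta
\;\leq\; 2c_n\int|\nabla_b\eta|^2\,dV_\theta \;+\; \int R^-(\hat\theta)\,v^2\eta^2\,dV_{\hat\theta}.
\]
The right-hand side is $\delta$-independent, so monotone convergence as $\delta\to 0$ removes the indicator.

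To bound the curvature integral, I would apply a three-factor Hölder inequality with conjugate exponents $(\tfrac{2Q}{Q+2},\tfrac{2Q}{Q-2},\infty)$ to the factorization $R^-\cdot(v\eta)\cdot(v\eta)$:
\[
\int R^-(\hat\theta)\,v^2\eta^2\,dV_{\hat\theta}
\;\leq\; \|R^-(\hat\theta)\|_{L^{2Q/(Q+2)}(dV_{\hat\theta})}\,\|v\eta\|_{L^{2Q/(Q-2)}(dV_{\hat\theta})}\,\|v\|_{L^\infty(D'\setminus S)}.
\]
The decisive simplification is $(v\eta)^{2Q/(Q-2)}\,dV_{\hat\theta}=\eta^{2Q/(Q-2)}\,dV_\theta$, which follows from $uv\equiv 1$ and $dV_{\hat\theta}=u^{2Q/(Q-2)}dV_\theta$; hence the middle factor is dominated by $\operatorname{vol}_\theta(D)^{(Q-2)/(2Q)}$. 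The first factor is finite by hypothesis, and the last by the opening paragraph, giving $\int_{D'}|\nabla_b\log u|^2\,dV_\theta<\infty$.

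Finally, for $0<\epsilon<\delta$ with $\delta$ fixed small, set
\[
\phi_\epsilon \,:=\, \min\!\bigl(1,\,\max(0,\,\log(\delta/v)/\log(\delta/\epsilon))\bigr),
\]
a Lipschitz function equal to $1$ on the open neighborhood $\{v<\epsilon\}\supset S$, vanishing on $\{v\geq\delta\}\subset D$, and satisfying $|\nabla_b\phi_\epsilon|^2=|\nabla_b\log u|^2/\log^2(\delta/\epsilon)$ on $\{\epsilon<v<\delta\}$. Thus
\[
\int|\nabla_b\phi_\epsilon|^2\,dV_\theta \;\leq\; \frac{1}{\log^2(\delta/\epsilon)}\int_{D'}|\nabla_b\log u|^2\,dV_\theta \;\longrightarrow\; 0\quad(\epsilon\to 0^+),
\]
giving $\operatorname{Cap}_2(S)=0$. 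The conclusion $\dim_{\mathcal H}(S)\leq Q-2$ then follows from the standard implication $\operatorname{Cap}_2(K)=0\Rightarrow \mathcal H^d(K)=0$ for every $d>Q-2$ in the stratified Carnot–Carathéodory setting, as developed in~\cite{bonfiglioli2007stratified} and~\cite{Costea}. The principal technical obstacle is the rigorous handling of the limit $\delta\to 0$ with the truncation $(v-\delta)_+$ — chosen precisely to keep the test function supported away from $S$ while recovering the full integrability in the limit — together with the conformal collapse $uv\equiv 1$, which is exactly what makes the $L^{2Q/(Q+2)}(dV_{\hat\theta})$ hypothesis suffice to close the capacity estimate.
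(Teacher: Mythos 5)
Your proof is correct but takes a genuinely different route from the paper's. The paper tests the full Yamabe equation against the truncated function $\phi_{\alpha,\beta}=u_{\alpha,\beta}-\beta+\beta(1-\eta)$ (with $u_{\alpha,\beta}$ the linear cap of $u-\alpha$ at height $\beta$) and, after an integration by parts together with the H\"older-derived fact $\int_D R^-(\hat\theta)u^{(Q+2)/(Q-2)}\,dV_\theta<\infty$, obtains the bound $\int_{\Sigma_\alpha}\left|\nabla_b\left(u_{\alpha,\beta}/\beta\right)\right|^2\,dV_\theta\le C/\beta$; since $u_{\alpha,\beta}/\beta\equiv 1$ on the neighborhood $\{u\ge\alpha+\beta\}$ of $S$, sending $\beta\to\infty$ gives $\operatorname{Cap}_2(S,D)=0$ in one stroke. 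You instead set $v=1/u$, test the sub-inequality $c_n\Delta_b u\le R^-(\hat\theta)u^{(Q+2)/(Q-2)}$ against $\eta^2(v-\delta)_+$, establish the intermediate estimate $\int\eta^2|\nabla_b\log u|^2\,dV_\theta<\infty$ (using the conformal identity $(v\eta)^{2Q/(Q-2)}\,dV_{\hat\theta}=\eta^{2Q/(Q-2)}\,dV_\theta$ to close the three-factor H\"older with the $L^{2Q/(Q+2)}(dV_{\hat\theta})$ hypothesis), and then feed this into the standard logarithmic capacity cutoff $\phi_\epsilon$. Both routes rely on Lemma~\ref{lemma-2} for the blow-up $u\to\infty$ near $S$ (and both therefore inherit the paper's implicit passage from the $L^p(dV_\theta)$ hypothesis of this lemma to the $L^p(dV_{\hat\theta})$ hypothesis of Lemma~\ref{lemma-2} after shrinking $D$), and both conclude $\dim_{\mathcal H}(S)\le Q-2$ from vanishing $2$-capacity via the sub-Riemannian capacity-to-dimension implication. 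The paper's truncation is more direct; your two-step argument (finite log-Dirichlet energy, then log-cutoff) is arguably more transparent, isolates $\int|\nabla_b\log u|^2<\infty$ as a reusable intermediate fact, and makes visible exactly why the $L^{2Q/(Q+2)}(dV_{\hat\theta})$ integrability is the critically sharp input.
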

\begin{proof}
Recall the scalar curvature equation
\begin{align}
\label{eq:2}
-c_u \Delta_b u=-R u+R^{+}(\hat{\theta}) u^{\frac{Q+2}{Q-2}}- & R^{-}(\hat{\theta}) u^{\frac{Q+2}{Q-2}}
\end{align}
in $D\backslash S$.
Now by H\"older,
\begin{multline}
\label{eq:3}
\int_{D\backslash S} R^{-}(\hat{\theta}) u^{\frac{Q+2}{Q-2}} d V_\theta \leqslant \left(\int_{D \backslash S} R^-(\hat{\theta})^{\frac{2 Q}{Q+2}} u^{\frac{2 Q}{Q-2}} d V_\theta\right)^{\frac{Q+2}{2 Q}} \operatorname{Vol}(D)^{\frac{Q-2}{2 Q}}<+\infty,
\end{multline}
where $\mathrm{Vol} (D)$ is with respect to $\theta$. The expression in \cref{eq:3} is finite by hypothesis. Next, under the hypothesis of our  \cref{lemma-2}, we have
$$
u(x) \rightarrow+\infty, x \rightarrow S \text {. }
$$
Using this fact above, define the test function
$$
u_{\alpha, \beta}= \begin{cases}\beta, & u \geqslant \alpha+\beta . \\ u-\alpha, & u<\alpha+\beta .\end{cases}
$$
and set
$$
\phi_{\alpha, \beta}=u_{\alpha, \beta}-\beta+\beta(1-\eta)
$$
where $\eta$ is a cut-off function that is equal to $1$ in a neighborhood of $S$. Define
$$
\Sigma_\alpha=\{x \in D: u(x)>\alpha\} \text {. }
$$
We note that if $\beta$ is sufficiently large, then $u_{\alpha, \beta} \in(0, \beta]$ in $\sum_\alpha$, and
$$
\begin{aligned}
& \phi_{\alpha, \beta}=0 \text { on }\{x \in D: u(x)=\alpha\} \cup\{x \in D: u \geqslant \alpha+\beta\} .
\end{aligned}
$$
Note that
$$
\nabla_b \phi_{\alpha, \beta}=\nabla_b u_{\alpha, \beta}-\beta \nabla_b \eta,\quad \nabla_b u=\nabla_b u_{\alpha, \beta}\text{ where }\nabla_b u_{\alpha, \beta} \neq 0.
$$
We multiply \cref{eq:2} by $\phi_{\alpha, \beta}$ and get after integration by parts, using as $d V_\theta$ the volume element $\theta \wedge(d \theta)^n$,
$$
\begin{aligned}
& c_n \int_{\sum_\alpha} \nabla_b u \cdot \nabla_b \phi_{\alpha, \beta} d V_\theta=\int_{\sum_{\alpha}}\phi_{\alpha,\beta}\left(-R u+R(\hat{\theta}) u^{\frac{Q+2}{Q-2}}\right) d V_\theta .
\end{aligned}
$$
\begin{multline}
\label{eq:4}
c_n\int_{\Sigma_a}\left|\nabla_b u_{\alpha, \beta}\right|^2 d V_\theta=\beta \int_{\sum_\alpha}\left(c_n \nabla_b u \cdot \nabla_b \eta+\left(-R u+R(\hat{\theta}) u^{\frac{Q+2}{Q-2}}\right)(1-\eta) d V_\theta \right. \\-\int_{\sum_\alpha}\left(-R u+R(\hat{\theta}) u^{\frac{Q+2}{Q-2}}\right)\left(\beta-u_{\alpha, \beta}\right) d V_\theta
+\int_{\sum_\alpha} R^{-}(\hat{\theta}) u^{\frac{Q+2}{Q-2}}\left(\beta-{u}_{\alpha, \beta}\right) d V_\theta .
\end{multline}
Using \cref{eq:3}, $R \leqslant 0, u_{\alpha, \beta} \in(0, \beta]$ and the location of the support of $1-\eta$, we easily see the right hand side of \cref{eq:4} is bounded by $C \beta$, where $C$ depends on $\alpha, \eta$ but not on $\beta$. Thus by \cref{lemma-2},
$$
\int_{\Sigma_\alpha}\left|\nabla_b\left(\frac{u_{\alpha, \beta}}{\beta}\right)\right|^2 \leqslant c / \beta \rightarrow 0,
$$
if $\beta \rightarrow \infty$. Thus $\operatorname{Cap}_2(S, D)=0$. Using Cor. 4.6 in ~\cite{Costea},  we conclude  the Hausdorff dimension of S satisfies
$\dim_{\mathcal H} S \leqslant Q-2$.
\end{proof}

Our goal is now to improve \cref{lemma-3}. To do so we first prove:
\begin{lemma}
\label{lemma-4}
Given that $u$ satisfies \cref{eq:2} the scalar curvature equation and the hypothesis on $R^{-}(\hat{\theta})$ of \cref{lemma-3} is satisfied and $R \leqslant 0$, where $R$ is the scalar curvature of the background contact form $\theta$. Then if $\hat{\theta}=u^{4 /(Q-2)} \theta$ is geodesically complete near $S \subseteq M, S$ closed, we have that $-\Delta_b u$ is a Radon measure and $-\left.{\Delta_b u}\right|_S \geqslant 0$. We restrict our attention in this lemma to $D$, an open neighborhood of $S$.
\end{lemma}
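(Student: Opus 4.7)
The plan is a truncation-and-passage-to-limit argument. For each level $c>0$, set $u_c:=\min(u,c)$. Since $u_c$ is bounded and Lipschitz on $D$, the distributional sub-Laplacian $-c_n\Delta_b u_c$ is automatically a signed Radon measure on $D$. I would first compute this measure explicitly: for a.e.\ $c$ the level set $\{u=c\}$ is a smooth hypersurface (by Sard's theorem), and integration by parts on $\{u<c\}\subset D\setminus S$ against a test function $\phi\in C_c^\infty(D)$, using the scalar curvature equation \cref{eq:2}, yields
\[
\langle -c_n\Delta_b u_c,\phi\rangle=\int_{\{u<c\}} f\,\phi\, dV_\theta + c_n\int_{\{u=c\}}\phi\,|\nabla_b u|\, dS,
\]
where $f=-Ru+R^{+}(\hat\theta)u^{(Q+2)/(Q-2)}-R^{-}(\hat\theta)u^{(Q+2)/(Q-2)}$ is the right-hand side of \cref{eq:2}. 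In measure-theoretic form this says
\[
-c_n\Delta_b u_c=f\,\chi_{\{u<c\}}\,dV_\theta+\sigma_c,
\]
where $\sigma_c$ is the non-negative Radon measure with density $c_n|\nabla_b u|$ concentrated on the level set $\{u=c\}$. Since $u\to+\infty$ at $S$ by \cref{lemma-2}, the level sets $\{u=c\}$ retreat into every neighborhood of $S$ as $c\to\infty$, so any weak-$*$ limit of $\sigma_c$ is supported in $S$.

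Next, split $f=f_+-f_-$ with $f_+:=-Ru+R^{+}(\hat\theta)u^{(Q+2)/(Q-2)}\geq 0$ (using $R\leq 0$) and $f_-:=R^{-}(\hat\theta)u^{(Q+2)/(Q-2)}\geq 0$. The H\"older estimate \cref{eq:3}, combined with the hypothesis $R^{-}(\hat\theta)\in L^{2Q/(Q+2)}(D\setminus S,dV_{\hat\theta})$, gives $f_-\in L^1(D,dV_\theta)$. Rewriting the above identity as
\[
-c_n\Delta_b u_c+f_-\,\chi_{\{u<c\}}\,dV_\theta=f_+\,\chi_{\{u<c\}}\,dV_\theta+\sigma_c
\]
displays both sides as \emph{non-negative} Radon measures on $D$. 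Weak-$*$ compactness of non-negative Radon measures then yields (along a subsequence) a limiting Radon measure $\nu=f_+\,dV_\theta+\sigma$, with $\sigma\geq 0$ supported on $S$; setting $-c_n\Delta_b u:=\nu-f_-\,dV_\theta$ defines a signed Radon measure on $D$ whose restriction to $D\setminus S$ recovers the classical $-c_n\Delta_b u=f$ and whose singular part on $S$ equals $\sigma\geq 0$, giving both assertions.

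The main obstacle is the uniform local-mass bound required to extract a weak-$*$ limit and to guarantee that the limit is genuinely a Radon measure (locally finite) rather than merely a positive distribution. I would obtain this by testing \cref{eq:2} against cutoff functions of the same flavour as $\phi_{\alpha,\beta}$ used in the proof of \cref{lemma-3}: the vanishing of the $2$-capacity of $S$ proved there produces $\eta_\varepsilon\in C_c^\infty(D)$ with $\eta_\varepsilon=1$ near $S$ and $\int|\nabla_b\eta_\varepsilon|^2\, dV_\theta\to 0$, from which the same energy manipulation as in \cref{lemma-3} shows that for each fixed $\phi\in C_c^\infty(D)$ with $\phi\geq 0$ the quantity $\langle -c_n\Delta_b u_c,\phi\rangle+\int f_-\phi\, dV_\theta$ is bounded uniformly in $c$ by a constant depending only on $\phi$ and on the hypothesized $L^p$- and $L^{2Q/(Q+2)}$-bounds on $R^{-}(\hat\theta)$. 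With this uniform bound in hand, the preceding paragraph delivers the full conclusion of \cref{lemma-4}.
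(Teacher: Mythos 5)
Your proof takes a genuinely different route from the paper's. The paper first proves that $f := -c_n\Delta_b u \in L^1(D)$ by testing against the smooth \emph{concave} truncations $\alpha_s(u)$ (chosen so that $\alpha_s'' \leq 0$, $\alpha_s' \in [0,1]$, $\alpha_s(t)=t$ for $t\leq s$), applying Green's theorem on $D$ and Fatou's lemma; the Radon-measure conclusion and the sign on $S$ then follow from uniform $L^1$ bounds on $\Delta_b\alpha_s(u)$ as $s\to\infty$. You instead use the \emph{hard} truncations $u_c := \min(u,c)$, which generate surface measures $\sigma_c$ on the level sets $\{u=c\}$, split $f$ into its positive and negative parts, and pass to a weak-$*$ limit. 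Both routes are workable; the paper's avoids surface measures and Sard/coarea technicalities entirely by smoothing the truncation, while yours makes the appearance of a non-negative singular part supported on $S$ more geometrically visible.

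Two points in your write-up need repair. First, the opening assertion that ``since $u_c$ is bounded and Lipschitz on $D$, the distributional sub-Laplacian $-c_n\Delta_b u_c$ is automatically a signed Radon measure'' is false as stated: boundedness and Lipschitz regularity alone do not force $\Delta_b$ to have order zero. What justifies it is the integration-by-parts identity in the very next line, which exhibits $-c_n\Delta_b u_c$ explicitly as $f\chi_{\{u<c\}}\,dV_\theta + \sigma_c$; the Radon-measure property should be attributed to that computation, not to the Lipschitz bound.

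Second, and more substantively, the mechanism you propose for the uniform local-mass bound --- routing it through the vanishing $2$-capacity of $S$ and small-Dirichlet-energy cutoffs $\eta_\varepsilon$ from \cref{lemma-3} --- is not convincing: it is unclear how smallness of $\int|\nabla_b\eta_\varepsilon|^2\,dV_\theta$ would translate into a bound, uniform in $c$, on the total mass of $f_+\chi_{\{u<c\}}\,dV_\theta + \sigma_c$. Fortunately the bound you need is available by a much more elementary route that requires no capacity input. For a compact $K\subset D$ choose $\phi\in C_c^\infty(D)$ with $0\leq\phi\leq 1$, $\phi\equiv 1$ on $K$ and on a neighborhood of $S$. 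Then $\operatorname{supp}\nabla_b\phi$ is a compact subset of $D\setminus S$, on which $u$ and $\nabla_b u$ are smooth and bounded, so
$\langle-c_n\Delta_b u_c,\phi\rangle = c_n\int_{\{u<c\}}\nabla_b u\cdot\nabla_b\phi\, dV_\theta$
is bounded independently of $c$. Combined with $f_-\in L^1(D)$ from your H\"older step, this yields
$\sup_c\left(\int_{\{u<c\}}f_+\phi\,dV_\theta + \sigma_c(\phi)\right)<\infty$,
which is exactly the local-mass bound you were after. Once this step is made explicit, the rest of your argument --- weak-$*$ extraction, the retreat of $\operatorname{supp}\sigma_c$ to $S$, and the identification of the singular part on $S$ as $\sigma\geq 0$ --- is in order.
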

\begin{proof}
We begin by proving that if
$$
\begin{aligned}
-c_n \Delta_b u=-R u+R^{+}(\hat{\theta}) u^{\frac{Q+2}{Q-2}}-R^{-}(\hat{\theta}) u^{\frac{Q+2}{Q-2}}=f
\end{aligned}
$$
in $D\backslash S$, then $f \in L^{1}(D)$. Consider the function.
\[
\alpha_s(t) =
\begin{cases}
  t, & t \leqslant s, \\
  \text{{increasing}}, & t \in [s, 10s], \\
  2s, & t \geqslant 10s.
\end{cases}
\]
We observe that $\alpha_s^{\prime} \in[0,1], \alpha_s^{\prime \prime} \leqslant 0$, since $\alpha(s)$ can be selected to be concave. Then
\begin{equation}
\label{eq:5}
-\Delta_b \alpha_s(u)=-\alpha_s^{\prime \prime}(u)|\nabla u|^2+\alpha_s^{\prime}(u)\left(-\Delta_b u\right)
\end{equation}
Then for $s>\max \{u(x): x \in \partial D\}$, and by recalling that
$
\Delta_b=\sum_{i=1}^{2n} X_i^2
$, we have by Green's theorem and where $\nu$ is the outward unit normal,
$$
\int_D-\Delta_b \alpha_s(u) d V_\theta=-\sum_{i=1}^{2n}\int_{\partial D}\left\langle \nu, X_i\right\rangle \alpha_s(u) d\sigma
$$
On $\partial D, \alpha_s(u)=u$ by construction of $\alpha_s(t)$ and choice of $s$. Using \cref{eq:5} above, we get
\begin{multline}
\label{eq:6}
- \sum_{i=1}^{2n} c_n \int_{\partial D}\left\langle \nu, X_i\right\rangle u\, d \sigma =-\int_D c_n \alpha_s^{\prime \prime}(u)\left|\nabla_b u\right|^2 \\
+\int_D \alpha_s^{\prime}(u)\left[-R u-R(\hat{\theta}) u^{\frac{Q+2}{Q-2}}+R^{+}(\hat{\theta}) u^{\frac{Q+2}{Q-2}}\right] d V_\theta .
\end{multline}
Note that $\alpha_s^{\prime \prime} \leqslant 0$ by construction, and $-R u \geqslant 0$ by hypothesis. So from \cref{eq:6},
\begin{multline*}
\int_D \alpha_s^{\prime}(u)\left(R^{+}(\hat{\theta}) u^{\frac{Q+2}{Q-2}}-R u\right) \leqslant  -c_n \sum_{i=1}^{2n} \int\left\langle \nu, X_i\right\rangle u d \sigma
+\int_D R^{-}(\hat{\theta}) u^{\frac{Q+2}{Q-2}} d V_\theta .
\end{multline*}
By hypothesis we saw using the H\"older inequality in the previous lemma,
$$
\int_D R^{-}(\hat{\theta}) u^{\frac{Q+2}{Q-2}} d V_\theta<+\infty .
$$
Thus, from Fatou's lemma, letting $s \rightarrow+\infty$,
\begin{equation}
\label{eq:7}
\int_D\left(R^{+}(\hat{\theta}) u^{\frac{Q+2}{Q-2}}-R u\right) d V_\theta<+\infty .
\end{equation}
This proves $f \in L^{1}(D)$. Next, from \cref{eq:5} again, since $\alpha_s^{\prime \prime} \leqslant 0$,
\begin{multline*}
\int_D\left|\Delta_b \alpha_s(u)\right| \leqslant -c_n\int_D \alpha_s^{\prime \prime}(u)\left|\nabla_b u\right|^2 \\
+\int_D \alpha_s^{\prime}\left[-R u+R^{+}(\hat{\theta}) u^{\frac{Q+2}{Q-2}}+{R}^-(\hat{\theta}) u^{\frac{Q+2}{Q-2}}\right]dV_\theta
\end{multline*}
Using the identity \cref{eq:6} again and replacing the integral
$c_n \int_D-\alpha_s^{\prime \prime}(u)\left|\nabla_b u\right|^2$ above via \cref{eq:6},
\begin{multline*}
\int_D\left|\Delta_b \alpha_s(u)\right| \leqslant c_n \sum_{i=1}^{2n} \int_{\partial D}\left\langle \nu, X_i\right\rangle u\, d \sigma \\+2 \int_D \alpha_s^{\prime}(u)\left(R^{+}(\theta) u^{\frac{Q+2}{Q-2}}-R u\right) d V_\theta
+2 \int_D R^{-}(\hat{\theta}) u^{\frac{Q+2}{Q-2}} d V_\theta
\end{multline*}
Using \cref{eq:7} and \cref{eq:3} again, we get
$$
\int_D\left|\Delta_b \alpha_s(u)\right|<+\infty \text {. }
$$
This for $\phi \in C^0(D)$,
$$
\begin{aligned}
\left|\left\langle-\Delta_{b}\alpha_s(u), \phi\right\rangle\right| & =\left| \int_D \Delta\alpha_s(u) \phi \right| \\
& \leqslant\|\phi\|_{C^0(D)} \int_D\left|\Delta_b \alpha_s(u)\right| .
\end{aligned}
$$
We now have,
\begin{equation*}
\begin{aligned}
\left\langle\Delta_b u, \phi\right\rangle & =\int_D \nabla_b u \cdot \nabla_b \phi\, d V_\theta,\quad \phi \in C_0^{\prime}(D) \\
& =\lim _{s \rightarrow \infty} \int_D \alpha_s^{\prime}(u) \nabla_b \cdot \nabla_b \phi \,d V_\theta \\
& =\lim _{s \rightarrow \infty}\left\langle-\Delta_b \alpha_s(u), \phi\right\rangle.
\end{aligned}
\end{equation*}
Thus
\begin{multline*}
\left|\left\langle-\Delta_b u, \phi\right\rangle\right| \leqslant c\left\{\sum_{i=1}^{2n}\left|\int_{\partial D}\left\langle \nu, X_i\right\rangle u \,d \sigma\right|+\int_D R^{-}(\hat{\theta})^{\frac{Q+2}{Q-2}} u^{\frac{Q+2}{Q-2}} d V_\theta\right\}\|\phi\|_{c^0(D) .}
\end{multline*}
Thus $-\Delta_b u$ is a Radon measure.
We now show that it is a non-negative Radon measure. Let $\phi \geqslant 0, \phi \in C_0^{\infty}(D)$, then we have from above,
$$
\begin{aligned}
c_n\langle-\Delta u, \phi\rangle & =\lim _{s \rightarrow \infty} c_n \int_D\left(-\Delta \alpha_s(u)\right) \phi \,d V_\theta \\
& =\lim _{s \rightarrow \infty} \int_D-c_n \alpha_s^{\prime \prime}(u)\left|\nabla_b u\right|^2 \phi d V_\theta \\
& +\lim _{s \rightarrow \infty} \int_D \alpha_s^{\prime}(u)(-R u) \phi \,d V_\theta \\
& +\lim _{s \rightarrow \infty} \int_D \alpha_s^{\prime}(u) R(\hat{\theta}) u^{\frac{Q+2}{Q-2}} \phi \,d V_\theta
\end{aligned}
$$
We can drop the first term on the right above since $\alpha_s^{\prime\prime}\leq 0$ and so
$$
c_n\langle-\Delta u, \phi\rangle \geqslant-\left(\int_{\text{supp } \phi \backslash S} \mid-R u+R(\hat{\theta}) u^{\frac{Q+2}{Q-2}} \mid d V_\theta\right)
$$
The integral on the right hand side is finite from above, and so since
$$
\int_{\operatorname{supp }\phi\backslash S} d V_\theta \rightarrow 0 \text {, }
$$
with $\|\phi\|_{C^O(D)}=1$, we get $$
-\int_{\operatorname{supp} \phi\backslash S} \mid-R u+R(\hat{\theta}) u^{\frac{Q+2}{Q-2}} \mid d V_\theta \rightarrow 0
$$
and so $-\left.\Delta_b{ }u\right|_S \geqslant 0$.\end{proof}

\noindent
\textbf{Remark:} We note that $-\Delta u=f, f \in L^{1}(D)$, and so our Radon measure is non-negative and absolutely continues with respect to the volume measure $d V_\theta$.
Next from \cref{lemma-3}, $\operatorname{dim}_\mathcal{H}(S) \leqslant Q-2<Q$.
Thus \cref{lemma-1} applies.

Let $\Omega \subseteq M$ be a bounded, open set. Let $A$ be a non-negative Radon measure on $M$. We define for $\rho$ the metric distance,
\[
R_\mu^{\alpha, \Omega}(x)=\int_{\Omega} \frac{d \mu(y)}{\rho(x, y)^{Q-\alpha}} ; 0 < \alpha \leqslant Q .
\]
Let \( E \subseteq \Omega \). We define
\begin{definition}
The capacity
\[
C^\alpha(E, \Omega)=\inf \left\{\mu(\Omega) \mid R_\mu^{\alpha, \Omega}(x) \geqslant 1, \forall x \in E\right\}.
\]
\end{definition}
Let us look at a coordinate patch $U$ of $M$, and there we may assume that $U\cap M$ is Heisenberg with a dilation structure $\delta_\lambda$. We assume $E \subset \Omega \subset U$. Then
\begin{lemma}
\label{lemma-5}
Set $A_\lambda=\left\{\delta_\lambda(x): x \in A\right\}$.
Then
$$
C^\alpha\left(E_\lambda, \Omega_\lambda\right)=\lambda^{Q-\alpha} C^\alpha(E, \Omega), \alpha \in(1, Q] \text {. }
$$
\end{lemma}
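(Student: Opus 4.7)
\textbf{Proof plan for \cref{lemma-5}.} The proof is a standard scaling argument once we set up the correct pushforward of measures. The two ingredients I will use are: first, that the Heisenberg gauge/metric is homogeneous of degree one under $\delta_\lambda$, i.e.\ $\rho(\delta_\lambda x,\delta_\lambda y)=\lambda\,\rho(x,y)$; and second, the change-of-variables formula $\int f\,d((\delta_\lambda)_*\mu)=\int f\circ\delta_\lambda\,d\mu$ for the pushforward measure. Both are standard in the Heisenberg setting and will not be expanded in detail.

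The plan is to prove the two inequalities separately. For the bound $C^\alpha(E_\lambda,\Omega_\lambda)\leq \lambda^{Q-\alpha}C^\alpha(E,\Omega)$, take any admissible Radon measure $\mu$ on $\Omega$, so that $R_\mu^{\alpha,\Omega}(x)\geq 1$ for every $x\in E$. Define the rescaled pushforward
\[
\tilde{\mu}:=\lambda^{Q-\alpha}\,(\delta_\lambda)_*\mu,
\]
which is a Radon measure supported in $\Omega_\lambda$. Then for any $y=\delta_\lambda x$ with $x\in E$, the change of variables and the homogeneity of $\rho$ give
\[
R_{\tilde{\mu}}^{\alpha,\Omega_\lambda}(y)
=\lambda^{Q-\alpha}\int_{\Omega}\frac{d\mu(w)}{\rho(\delta_\lambda x,\delta_\lambda w)^{Q-\alpha}}
=\lambda^{Q-\alpha}\cdot\lambda^{-(Q-\alpha)}R_\mu^{\alpha,\Omega}(x)
=R_\mu^{\alpha,\Omega}(x)\geq 1,
\]
so $\tilde{\mu}$ is an admissible competitor for $C^\alpha(E_\lambda,\Omega_\lambda)$. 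Its total mass is $\tilde{\mu}(\Omega_\lambda)=\lambda^{Q-\alpha}\mu(\Omega)$, and taking the infimum over all $\mu$ yields the first inequality.

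The reverse inequality follows by applying exactly the same construction with $\delta_{1/\lambda}$ in place of $\delta_\lambda$ to any admissible measure for $C^\alpha(E_\lambda,\Omega_\lambda)$; this shows $C^\alpha(E,\Omega)\leq \lambda^{-(Q-\alpha)}C^\alpha(E_\lambda,\Omega_\lambda)$, which together with the first bound gives equality.

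The argument is almost entirely bookkeeping, and I do not anticipate a genuine obstacle. The only mild subtlety is making sure that the dilation $\delta_\lambda$ is indeed an isometry up to the factor $\lambda$ in the sub-Riemannian (Carnot--Carath\'eodory) metric being used, as opposed to merely the Koranyi gauge; the identity $\rho(\delta_\lambda x,\delta_\lambda y)=\lambda\rho(x,y)$ holds in either case by the standard Carnot-group theory in \cite{bonfiglioli2007stratified}, so the computation above goes through verbatim.
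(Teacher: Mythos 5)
Your proposal is correct and takes essentially the same approach as the paper: both rely on pushing forward trial measures under $\delta_\lambda$ (the paper's $\mu^*$ with $\mu^*(A_\lambda)=\mu(A)$ is exactly $(\delta_\lambda)_*\mu$) together with the homogeneity $\rho(\delta_\lambda x,\delta_\lambda y)=\lambda\rho(x,y)$, which gives $R_{(\delta_\lambda)_*\mu}^{\alpha,\Omega_\lambda}(\delta_\lambda x)=\lambda^{\alpha-Q}R_\mu^{\alpha,\Omega}(x)$. The paper writes it as a single chain of equalities, factoring out $\lambda^{Q-\alpha}$ from the infimum, whereas you pre-scale the pushed-forward measure by $\lambda^{Q-\alpha}$ and argue two inequalities; this is only a difference in bookkeeping, not in substance.
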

\begin{proof}
For a non-negative Radon measure $\mu$, define a Radon measure $\mu^*$  associated to $\mu$ for all $A_\lambda\subset\Omega_\lambda$ by setting:
$$
\mu^*\left(A_\lambda\right)=\mu(A).
$$
Then
$$
\begin{aligned}
& R_{\mu^*}^{\alpha, \Omega_\lambda}\left(\delta_\lambda(x)\right)=\int_{\Omega_\lambda} \frac{d\mu^*(y)}{\rho\left(\delta_\lambda(x), y\right)^{Q-\alpha}} . \\
& =\int_{\Omega} \frac{d{\mu(y)}}{\rho\left(\delta_\lambda(x), \delta_\lambda(y)\right)^{Q-\alpha}}=\lambda^{\alpha-Q} \int_{\Omega} \frac{d\mu(y)}{\rho(x, y)^{Q-\alpha}} \\
& \\
& =\lambda^{\alpha-Q} R_\mu^{\alpha, \Omega}(x) .
\end{aligned}
$$
So,
$$
\begin{aligned}
C^\alpha\left(E_\lambda, \Omega_\lambda\right)&=\inf \left\{\mu^*\left(\Omega_\lambda\right): R_{\mu^*}^{\alpha,\Omega_\lambda}(\lambda x) \geq 1, \forall x \in E\right\} \\
& =\inf \left\{\mu(\Omega): \lambda^{\alpha-Q} R_\mu^{\alpha,\Omega}(x) \geqslant 1, \forall x \in E\right\} \\
& =\lambda^{Q-\alpha} \inf \left\{\lambda^{\alpha-Q} \mu(\Omega): \lambda^{\alpha-Q} R_\mu^{\alpha, \Omega}(x) \geqslant 1, \forall x \in E\right\} \\
&= \lambda^{Q-\alpha} C^\alpha(E, \Omega).\qedhere
\end{aligned}
$$
\end{proof}
\begin{lemma}
\label{lemma-6}
Let $x_0 \in M$ be a fixed point. Consider a metric ball $B_r\left(x_0\right)$. In this metric ball we can assume $M$ is Heisenberg if $r \leqslant r_0$ for $r_0$ small, and so we have scaling like in the previous lemma. Then for $\alpha>0$,
$$
C^\alpha\left(\partial B_2\left(x_0\right), B_{5 r}\left(x_0\right)\right)=c_0>0 .
$$
\end{lemma}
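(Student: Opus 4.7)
The plan is to reduce the problem to a fixed model scale via \cref{lemma-5} and then establish positivity at that model scale by testing the definition of capacity against an explicit measure on the inner sphere.

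First, since $r\le r_0$ and $M$ agrees with the Heisenberg group on the coordinate patch, the dilation $\delta_r$ is available and \cref{lemma-5} gives
$$C^\alpha(\partial B_{2r}(x_0), B_{5r}(x_0)) \;=\; r^{Q-\alpha}\, C^\alpha(\partial B_2(x_0), B_5(x_0)),$$
so it suffices to bound the model capacity $C^\alpha(\partial B_2(x_0), B_5(x_0))$ from below by a strictly positive constant. Second, I would take as test object the surface measure $\sigma$ on $\partial B_2(x_0)$, with $\sigma_0 := \sigma(\partial B_2)>0$. For any admissible $\mu$, i.e.\ one for which $R_\mu^{\alpha, B_5}(x)\ge 1$ on $\partial B_2$, integrating this inequality against $\sigma$ and swapping the order of integration yields
$$\sigma_0 \;\le\; \int_{\partial B_2} R_\mu^{\alpha, B_5}(x)\, d\sigma(x) \;=\; \int_{B_5(x_0)} K(y)\, d\mu(y), \qquad K(y) := \int_{\partial B_2} \rho(x,y)^{\alpha-Q}\, d\sigma(x).$$
Once $K$ is shown to be uniformly bounded, $K\le K_0$ on $B_5(x_0)$, one obtains $\mu(B_5)\ge \sigma_0/K_0$ uniformly in admissible $\mu$, whence $C^\alpha(\partial B_2, B_5)\ge \sigma_0/K_0>0$, and scaling back gives the claim.

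The technical heart, which I expect to be the main obstacle, is the uniform estimate on $K(y)$. Away from $\partial B_2$ the kernel $\rho(x,y)^{\alpha-Q}$ is continuous in $x$ and $K(y)$ is trivially bounded; the delicate case is $y$ near $\partial B_2$. Because the Heisenberg gauge sphere has Carnot--Carath\'eodory Hausdorff dimension $Q-1$, the surface Riesz integral converges for $\alpha>1$ by classical Riesz estimates on a codimension-one set. For the smaller range $\alpha\in(0,1]$ also allowed in the statement, I would replace $\sigma$ by a smoothed analog --- a nonnegative bump $\phi\in C^\infty_0$ of unit $L^1$ mass supported in a thin annulus around $\partial B_2$ --- and repeat the duality after averaging the pointwise bound $R_\mu\ge 1$ over that annulus. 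In the smoothed version $K$ becomes an ambient $Q$-dimensional Riesz potential of $\phi$, which is uniformly bounded on $B_5(x_0)$ for every $\alpha>0$ by standard Riesz potential estimates on Carnot groups, and characteristic points of the Heisenberg sphere (where the surface measure would degenerate) are automatically handled.
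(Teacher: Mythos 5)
Your duality argument is a genuinely different route from the paper's, and for $\alpha>1$ it is correct and arguably cleaner. The paper instead proves, via a Vitali covering argument combined with the scaling of the surface measure $d\sigma = \theta\wedge e^1\wedge\cdots\wedge e^{2n-1}$, that for any normalized test measure $\mu$ there is a \emph{single} point $p=p(\mu)\in\partial B_r(x_0)$ at which $\mu$ has the upper Ahlfors growth $\mu(B_s(p))\leq c\,\mu(B_{5r})s^{Q-1}$; it then splits the Riesz potential $R_\mu^{\alpha}(p)$ into dyadic annuli and sums a geometric series (convergent precisely because $\alpha>1$) to get $R_\mu^{\alpha}(p)\leq c_1\mu(B_{5r})$, forcing $\mu(B_{5r})\geq 1/c_1$. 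You instead integrate the admissibility inequality $R_\mu^\alpha\geq 1$ over all of $\partial B_2$ against $\sigma$, apply Tonelli, and bound the symmetric kernel $K(y)=\int_{\partial B_2}\rho(x,y)^{\alpha-Q}\,d\sigma(x)$. Both arguments hinge on the same codimension-one upper Ahlfors regularity of the gauge sphere, and both deliver the same conclusion for $\alpha>1$; yours avoids the selection of a good point but shifts the burden to the uniform bound on $K$, including near characteristic points, which you rightly flag as the technical heart.

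There is, however, a genuine gap in your handling of the range $\alpha\in(0,1]$. The proposed fix — replacing $\sigma$ by a bump $\phi$ supported in a thin annulus around $\partial B_2$ and ``averaging the pointwise bound $R_\mu\geq 1$ over that annulus'' — cannot be carried out as stated: admissibility of $\mu$ only guarantees $R_\mu^\alpha\geq 1$ \emph{on} $\partial B_2$, not on any annular neighborhood of it, so the duality pairing $\int R_\mu^\alpha\,\phi\,dV$ does not admit the lower bound $\geq\|\phi\|_{L^1}$ that your computation requires. (Monotonicity of the capacity works against you here: $C^\alpha(\partial B_2,\Omega)\leq C^\alpha(\mathrm{ann},\Omega)$, so a lower bound on the annulus capacity gives nothing about the sphere capacity.) This is, though, somewhat moot: despite the ``$\alpha>0$'' in the statement, the paper's own proof explicitly restricts to $\alpha\in(1,Q)$, and every subsequent application (\cref{lemma-7} with $\alpha\in(1,Q)$ and the main theorem with $\alpha=2$) lies in that range. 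You should simply state your lemma for $\alpha\in(1,Q)$ and drop the smoothing paragraph, rather than attempt a repair that does not close.
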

\begin{proof}
Let us take the surface measure on $\partial B_r\left(x_0\right)$, given by
$d\sigma=\theta\wedge e^1\wedge\cdots\wedge e^{2n-1}$, see ~\cite{Cheng2005}. Then for $x\in \partial B_r(x_0)$
$$
\begin{aligned}
& R_\sigma^{\alpha, B}(x) \geqslant m_0>0, \quad B=B_{5r}\left(x_0\right) . \\
= & \int_{\partial B_r\left(x_0\right)} \frac{d \sigma(y)}{\rho(x, y)^{Q-\alpha}} \geqslant m_0>0 .
\end{aligned}
$$
Thus by definition:
$$
C^\alpha\left(\partial B_r, B\right) \leqslant \frac{1}{m_0}<+\infty .
$$
Thus our capacity is finite. We want to prove that it is non-zero.
We claim that we can find $p \in \partial B_r\left(x_0\right)$ such that for any finite non-negative measure $\mu$ on $B_{5r}\left(x_0\right)$, we have
\begin{equation}
\label{eq:8}
\mu\left(B_s(p) \cap B_{5 r}\left(x_0\right)\right) \leqslant c_n \mu\left(B_{5 r}\left(x_0\right)\right) s^{Q-1}
\end{equation}
for all $s>0$.
Note that if $s$ is sufficiently large $s \geqslant 100$, then
\cref{eq:8} is obvious. Let us normalize $\mu\left(B_{5 r}\left(x_0\right)\right)=1$.
Assume \cref{eq:8} is false. Then for each $q \in \partial B_r\left(z_0\right)$, $\exists r_q>0$ such that
\begin{equation}
\label{eq:9}
\mu\left(B_{r_q}(q) \cap B_{5 r}\left(x_0\right)\right) \geqslant c_0 r_q^{Q-1} .
\end{equation}
Then $\left\{B_{r_q}(q)\right\}$ is a covering of $\partial B_r\left(x_0\right)$.
Using Vitali's lemma, we may find a disjoint sub-family of balls
$$
\left\{B_{r_q}\left(q_1\right), \ldots, B_{r_{q_k}\left(q_k\right)}\right\}, q_j \in \partial B_r\left(x_0\right),
$$
such that
$$
\left\{B_{3 r_1}\left(q_1\right), \ldots, B_{3 r_{q_1}}\left(r_{q_1}\right)\right\}
$$
covers $\partial B_r\left(x_0\right)$. The metric family of balls satisfy the doubling property (cf.~\cite{nagel1985balls}). By disjointness and by \cref{eq:9},
\begin{align}
\label{eq:10}
c_0 \sum_{j=1}^k r_{q_j}^{Q-1} & \leqslant \sum_{j=1}^k \mu\left(B_{r_{q_j}}\left(q_j\right) \cap B_{5 r}\left(x_0\right)\right) \leqslant \mu\left(B_{5 r}\left(x_0\right)\right)=1.
\end{align}
Now consider the surface measure introduced in~\cite{Cheng2005}. As the situation is locally Heisenberg, we observe $\theta \wedge e^1\wedge\cdots\wedge e^{2n-1}=d\sigma$ scales under Heisenberg scaling $\delta_\lambda$,
\begin{equation}
\label{eq:11}
\delta_\lambda\left(\theta \wedge e^1\wedge\cdots\wedge e^{2n-1}\right)=\lambda^{Q-1}\theta \wedge e^1\wedge\cdots\wedge e^{2n-1}.
\end{equation}
So
$$
\begin{aligned}
\left|\partial B_r\left(x_0\right)\right| & =\int_{\partial B_r(x_0)} d\sigma\\
& \leqslant \sum_{j=1}^k\left|B_{3 r_{q_j}}\left(q_j\right) \cap {\partial B_r\left(x_0\right)}\right|
\end{aligned}
$$
Using \cref{eq:11}, we get
$$
|\partial B_r (x_0)|\leqslant C \sum r_{q_j}^{Q-1}\left|\partial B_r\left(x_0\right)\right| \text {. }
$$
We have a contradiction to \cref{eq:10} if $c_0$ is large enough. So our claim is proved.
We can now prove our lemma. For $\alpha \in(1, Q)$, and $p$ chosen to satisfy \cref{eq:8},
$$
\begin{aligned}
& R_\mu^{\alpha, B_{5 r}\left(x_0\right)}(p)=\int \frac{d \mu(y)}{\rho(p, y)^{Q-\alpha}} \\
& =\sum_{j=-\infty}^{4}  \int_{\left\{2^j<\rho(p, y) \leqslant 2^{j+1}\right\} \cap B_{5 r}\left(x_0\right)} \frac{d \mu}{\rho(p, y)^{Q-\alpha}}+c_0 \mu\left(B_{5 r}\right) \\
&=\sum_{j=-\infty}^4 2^{-j(Q-\alpha)} \mu\left(B_{2^{j+1}}(p)\cap B_{5 r}\left(x_0\right)\right) +c_0 \mu\left(B_{5 r}\left(x_0\right)\right).
\end{aligned}
$$
Using \cref{eq:8}, we get
$$
R_\mu^{\alpha, B_{5 r}\left(x_0\right)}(p)\leqslant c\mu\left(B_{5 r}\left(x_0\right)\right) \sum_{j=-\infty}^4 2^{j(\alpha-1)}+c_0 \mu\left(B_{5 r}\left(x_0\right)\right) .
$$
Since $\alpha>1$,
$$
R_\mu^{\alpha,B_{5r}\left(x_0\right)}(p) \leqslant c_1 \mu\left(B_{5 r}\left(x_0\right)\right), \quad c_1>0 .
$$
Thus by definition, and recalling $\mu(B_{5r}(x_0))=1$, we have
$$
C^\alpha\left(\partial B_r, B_{5 r}\left(x_0\right)\right) \geqslant \frac{1}{c_1}>0.\qedhere
$$
\end{proof}
\begin{definition}
Let $E \subset M, p \in M$. We say $E$ is $\alpha$-thin at $p$ for $\alpha \in(1, Q)$ if
$$
\sum_{i \geqslant 1} \frac{C^\alpha\left(E \cap \omega_i^\delta(p), \Omega_i^\delta(p)\right)}{C^\alpha\left(\partial B_{2^{-i} \delta}(p), B_{2^{-i+1}(p)}(p)\right.}<+\infty,
$$
where $\delta>0$ is small and $$
\begin{aligned}
& \omega_i^\delta(p)=\left\{x: \rho(x, p) \in\left[2^{-i} \delta, 2^{-i+1} \delta\right]\right\}, \\
& \Omega_i^\delta(p)=\left\{x: \rho(x, p) \in\left(2^{-i-1} \delta, 2^{-i+2} \delta\right)\right\}.
\end{aligned}
$$
\end{definition}
Thus $\Omega_i^\delta(p) \supset \omega_i^\delta(p)$ is the doubled annuli.

The last lemma that we need is
\begin{lemma}
\label{lemma-7}
Let $\left(M^n, \theta, J\right)$ be a complete pseudo-hermitian manifold, and let $\mu$ be a finite Radon measure absolutely continuous with respect to $\theta \wedge(d \theta)^n$ on a bounded domain $G \subset M^n$. Let
$S \subset G$, with $S$ compact. Assume that the Hausdorff dimension satisfies the condition $\dim_{\mathcal{H}}S>d$. Let $\alpha \in(1, Q)$ and $\alpha<Q-\alpha$. Then there exists $p \in S$ and $E \subset S$, that is $\alpha$-thin at $p$, such that
$$
\int_G \frac{d\mu(y)}{\rho(x, y)^{Q-\alpha}} \leqslant \frac{C}{\rho(x, p)^{Q-\alpha-d}}
$$
for some constant $C$, and for all $x \in B_\delta(p) \backslash E$, for some $\delta>0$.
\end{lemma}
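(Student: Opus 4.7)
My plan is to upgrade the exponent slightly: pick $d' \in (d,\min\{\dim_{\mathcal{H}}(S),Q-\alpha\})$. At scale $d'$, $\mathcal{H}_{d'}(S)=+\infty>0$ while $\mathcal{H}_{d'}(G_{d'}^{\infty})=0$ by \cref{lemma-1}, so I can pick $p \in S\setminus G_{d'}^{\infty}$; after shrinking $\delta$, this yields $\mu(B_r(p)) \leq C_0 r^{d'}$ for all $r \in (0,4\delta]$. The gap $d'-d>0$ is what will eventually produce the decaying factor $r_i^{d'-d}$ that makes the thinness sum converge.

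For $x \in B_\delta(p)$, put $r:=\rho(x,p)$ and split
\[
\int_G \frac{d\mu(y)}{\rho(x,y)^{Q-\alpha}} = \phi_1(x)+\phi_2(x),
\]
where $\phi_1$ is the integral over $\{\rho(y,p)>2r\}$ and $\phi_2$ over $\{\rho(y,p)\leq 2r\}$. On the far region $\rho(x,y)\geq \rho(y,p)/2$, so a dyadic decomposition in $\rho(y,p)$, combined with the growth $\mu(B_R(p))\leq C_0 R^{d'}$ (and the finiteness of $\mu(G)$ for the large-scale tail), yields $\phi_1(x)\leq C_1\, r^{-(Q-\alpha-d')}$; convergence of the geometric series uses $d'<Q-\alpha$.

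Fix $M$ large and define
\[
E:=\{x \in B_\delta(p)\setminus\{p\} : \phi_1(x)+\phi_2(x) > M\,\rho(x,p)^{-(Q-\alpha-d)}\}.
\]
Since $r^{-(Q-\alpha-d')} \leq \delta^{d'-d}\, r^{-(Q-\alpha-d)}$, shrinking $\delta$ forces $\phi_1\leq (M/2)\, r^{-(Q-\alpha-d)}$, so $\phi_2(x)\geq (M/2)\, r^{-(Q-\alpha-d)}$ on $E$. For $x \in E\cap \omega_i^\delta(p)$ with $r_i=2^{-i}\delta$ and $r\in[r_i,2r_i]$, the piece of $\phi_2$ supported in $B_{r_i/2}(p)$ is, via $\rho(x,y)\geq r_i/2$ and the $d'$-growth, at most $C\, r_i^{-(Q-\alpha-d')}$, again negligible for $i$ large; the remainder is supported in $\Omega_i^\delta(p)=\{\rho(y,p)\in(r_i/2,4r_i)\}$, giving
\[
R^{\alpha,\,\Omega_i^\delta(p)}_{\mu|_{\Omega_i^\delta(p)}}(x)\geq cM\, r_i^{-(Q-\alpha-d)} \quad \text{on } E\cap\omega_i^\delta(p).
\]
Rescaling $\mu|_{\Omega_i^\delta(p)}$ by the reciprocal of this lower bound produces an admissible measure for the capacity, yielding
\[
C^\alpha\!\left(E\cap\omega_i^\delta(p),\,\Omega_i^\delta(p)\right)\leq \frac{\mu(\Omega_i^\delta(p))}{cM\, r_i^{-(Q-\alpha-d)}}\leq \frac{C'}{M}\, r_i^{Q-\alpha+(d'-d)}.
\]

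Finally, \cref{lemma-6} scaled via \cref{lemma-5} (with the capacity monotone in the domain) gives the matching lower bound $C^\alpha(\partial B_{r_i}(p),B_{2r_i}(p))\geq c_1 r_i^{Q-\alpha}$, so each term of the thinness sum is at most $(C''/M)\, r_i^{d'-d}$, and $\sum_i (2^{-i}\delta)^{d'-d}<+\infty$ since $d'>d$. Hence $E$ is $\alpha$-thin at $p$, and with $C=M$ the desired estimate holds on $B_\delta(p)\setminus E$. The main obstacle is precisely this summability: with $d'$ replaced by $d$ each ratio in the thin-set sum would be only $O(1)$ independent of $i$ and the series would diverge, so the essential technical input is the strict inequality $\dim_{\mathcal{H}}(S)>d$, which is exactly what permits the upgrade $d\mapsto d'$.
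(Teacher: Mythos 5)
Your proof is correct and follows essentially the same strategy as the paper's: use \cref{lemma-1} together with $\dim_{\mathcal H}(S)>d$ to choose $p\in S$ with $\mu(B_r(p))\lesssim r^{d'}$ (the paper writes $d'=d+\varepsilon$), decompose the Riesz potential into near and far pieces, bound the far piece directly by the growth estimate, define the exceptional set where the near piece is too large, bound its capacity by using the rescaled $\mu$ as a trial measure, and conclude $\alpha$-thinness from the gap $d'-d>0$ after normalizing by the annular capacity from \cref{lemma-5,lemma-6}. The only cosmetic difference is that you split the integral two ways at scale $2\rho(x,p)$ while the paper uses a three-way split (I, II, III) tied to the fixed dyadic annulus $\Omega_i^\delta(p)$; the estimates and conclusion are the same.
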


\noindent
\textbf{Remark:} In \cref{lemma-4}, we showed that under the hypotheses, we have
$$
-\Delta_b u=f=\mu,
$$
and $f \in L^{1}(D)$, so in our application $\mu$ is indeed a finite Radon measure that is absolutely continuous with respect to $\theta \wedge(d \theta)^n$.
\begin{proof}
By hypothesis, there is some $\varepsilon>0$, such
$$
\mathcal{H}_{d+\varepsilon}(S)=+\infty \text {, }
$$
since the Hausdorff dimension of $S$ is greater than $d$.
Thus, by \cref{lemma-1}, there is a point $p \in S$ such that
$$
\varlimsup_{r \rightarrow 0} \frac{\mu\left(B_r(p)\right)}{r^{d+\varepsilon}} \leqslant C<+\infty .
$$
Thus,
\begin{equation}
\label{eq:12}
\mu\left(B_r(p)\right) \leqslant C r^{d+\varepsilon},
\end{equation}
where $r$ is small. Next
\begin{equation}
\label{eq:13}
R_\mu^{\alpha, \Omega}(x)=\int_{\Omega} \frac{d{\mu(y)}}{\rho(x, y)^{Q-\alpha} }.
\end{equation}
Let $i_0$ be fixed and $i\geq i_0+2$. We decompose \cref{eq:13}
\begin{align}
&= \int_{\Omega\backslash B_{2^{-i_0+2}\delta}} \frac{d \mu(y)}{\rho(x, y)^{Q-\alpha}} + \int_{B_{2^{-i_0+2}\delta}\backslash\Omega} \frac{d \mu(y)}{\rho(x, y)^{Q-\alpha}} + \int_{\Omega^i_\delta} \frac{d \mu(y)}{\rho(x, y)^{Q-\alpha}}. \label{eq:14} \\
&= \mathrm{I} + \mathrm{II} + \mathrm{III} \nonumber
\end{align}
where we recall
$$
\begin{array}{ll}
\Omega_\delta^i&=\{y: \left.\rho(p, y) \in\left(2^{-i-1} \delta, 2^{-i+2} \delta\right)\right\}, \\
\omega_\delta^i & =\left\{x: \rho(p, x) \in\left(2^{-i} \delta, 2^{-i+1} \delta\right)\right\}.
\end{array}
$$
Clearly for $x \in \omega_i^\delta \subseteq \Omega$,
\begin{align}
I \leqslant & \frac{C \mu(\Omega)}{\left(2^{-i_0} \delta\right)^{Q-\alpha}} \nonumber\frac{C\left(2^{-i} \delta\right)^{Q-\alpha-d}}{\left(2^{-i_0} \delta\right)^{Q-\alpha}} \frac{\mu(\Omega)}{\rho(p, x)^{Q-\alpha-d}}\nonumber \\
& \leqslant \frac{C(\alpha,\delta,i_0,Q)}{\rho(p, x)^{Q-\alpha-d}},\label{eq:15}
\end{align}
since $d<Q-\alpha$.
We estimate $\mathrm{II}$ by cutting in annuli. We sub-divide $\mathrm{II}$ further into two pieces; an inner piece and an outer piece.
The inner piece of $\mathrm{II}$ is estimated by
\begin{align}
\int_{B_{2^{-i-1}\delta}(p)} \frac{d \mu(y)}{\rho(x, y)^{Q-\alpha}} \leqslant c \frac{\mu\left(B_{2^{-i-1} \delta}\right)}{\left(2^{-i-1} \delta\right)^{Q-\alpha}}\label{eq:16}.
\end{align}
The outer piece of $\mathrm{II}$ we cut into annuli
$$
\begin{aligned}
& \sum_{k=i_0}^{i-1} \int_{{B_{2^{-k+2}\delta}(p)} \backslash B_{2^{-k+1} \delta}(p)} \frac{d \mu(y)}{\rho(x, y)^{Q-\alpha}} \\
& \leqslant c \sum_{k=i_0}^{i-1} \frac{\mu\left(B_{2^{-k+2} \delta}(p)\right)}{\left(2^{-k}\delta\right)^{Q-\alpha} }.
\end{aligned}
$$
Using \cref{eq:12} with $\varepsilon=0$ and \cref{eq:16}.
\begin{align}
\mathrm{II}  &\leqslant c \sum_{k=i_0}^{i-1} \frac{\left(2^{-k} \delta\right)^d}{\left(2^{-k} \delta\right)^{Q-\alpha}}+\frac{c}{\left(2^{-i-1} \delta\right)^{Q-\alpha-d}}. \nonumber\\
& \leqslant \frac{c}{\rho(x, p)^{Q-\alpha-d} }.\label{eq:17}
\end{align}
We estimate $\mathrm{III}$. Define
$$
E_i^\lambda=\left\{x \in {\omega}_\delta^i: \int_{\Omega_i^\delta} \frac{d\mu(y)}{\rho(x, y)^{Q-\alpha}} \geqslant \lambda 2^{i(Q-\alpha-d)}\right\},
$$
where $\lambda>0$ is fixed. By the definition
of capacity ($\mu$ is a ``trial" measure)
$$
C^\alpha\left(E_i^\lambda, \Omega_i^\lambda\right) \leqslant \frac{\mu\left(\Omega_i^\delta\right)}{\lambda 2^{i(Q-\alpha-d)}}.
$$
By \cref{eq:12},
$$
C^\alpha\left(E_i^\lambda, \Omega_i^\lambda\right)\leqslant c \frac{\left(2^{-i+2} \delta\right)^{d+\varepsilon}}{\lambda 2^{i(Q-\alpha-d)}} \text {. }
$$
By \cref{lemma-5,lemma-6}, we have
$$
C^\alpha\left(\partial B_{2^{-i} \delta}, B_{2^{-i+1} \delta}\right)=c\left(2^{-i} \delta\right)^{Q-\alpha}, c>0 \text {. }
$$
Thus
$$
\sum_{i \geqslant i_0} \frac{C^\alpha\left(E_i^\lambda, \Omega_i^\delta\right)}{C^\alpha\left(\partial B_{2^{-i}\delta}, B_{2^{-i+1}\delta}\right)} \leqslant \frac{c}{\lambda} \sum_{i \geqslant i_0} 2^{-i \varepsilon}<\infty.
$$
Thus if $x \notin \bigcup_i E_i^\lambda=E$, and $x \in \omega_\delta^i$
\begin{align}
\mathrm{III} \leqslant \lambda \cdot 2^{i(Q-\alpha-d)} \leqslant \frac{c}{\rho(x, p)^{Q-\alpha-d} \text {}} \text {.}\label{eq:18}
\end{align}
Collecting the estimates for $\mathrm{I}, \mathrm{II}, \mathrm{III}$, ie. \cref{eq:15,eq:17,eq:18}
$$
\begin{aligned}
& \int_{\Omega} \frac{d \mu}{\rho(x, y)^{Q-\alpha}} \leqslant \frac{c}{\rho(x, p)^{Q-\alpha-d}},
\end{aligned}
$$
if $x\notin E=\cup_i E_i^\lambda$.
\end{proof}
We now prove the main theorem.
\begin{proof}[Proof of \cref{theorem-1}]
By taking a connected sum of $M$ with a manifold of very negative Yamabe constant $M_1, M \# M_1$ can be assumed to have background scalar curvature $R \leqslant 0$. By \cref{lemma-4}, for $S \subseteq M$ and $S$ closed,
$$
-\Delta_b u=\mu \text { in } D,
$$
where $D$ is some neighborhood of $S$, and $\mu$ is a finite, Radon measure that is absolutely continuous with respect to $\theta \wedge(d \theta)^n$. Thus we have, via Green's
representation,
\begin{align}
\label{eq:19}
u=\int_D G(x, y) d \mu(y)+h(x),
\end{align}
where $h$ is harmonic with respect to $\Delta_b$. Now
$$
0 \leqslant G(x, y) \leqslant \frac{C}{\rho(x, y)^{Q-2}}.
$$ (cf.~\cites{nagel1985balls,bonfiglioli2007stratified}).
Thus from \cref{eq:19},
\begin{align}
u(x) \leqslant & \int_D \frac{d \mu(y)}{\rho(x, y)^{Q-2}}+h(x) \nonumber\\
& =R_\mu^{2, D}(x)+h(x) .\label{eq:20}
\end{align}
By contradiction assume Hausdorff $\dim_{\mathcal{H}}(S)=d>\frac{Q-2}{2}$. We know that $d\leq Q-2$. Hence, from \cref{lemma-7} and \cref{eq:20}, there exists $p \in S$ such that
$$
u(x) \leqslant \frac{c}{\rho(x, p)^{Q-2-d}}.
$$
We obtain,
\begin{align}
u(x)^{\frac{2}{Q-2}} \leqslant \frac{c}{\rho(x, p)^{2\frac{Q-2-d}{Q-2}}} .\label{eq:21}
\end{align}
Set
$$
\tau=\frac{2(Q-2-d)}{Q-2}=2-\frac{2 d}{Q-2}<1.
$$
Thus the curve length can be computed using \cref{eq:21} and using the metric
$$
\begin{aligned}
& \hat{\theta}=u^{\frac{4}{Q-2}} \theta
\end{aligned}
$$
we obtain the metric distance to the singular set is bounded by
$$
\begin{aligned}
\leqslant \int_\gamma \frac{d s}{s^\tau}<+\infty.
\end{aligned}
$$
This shows  $M \backslash S$ is not complete, which is a contradiction. This ends the proof.
\end{proof}

\section{Injectivity of the developing map}

Consider $(M^3,J,\theta)$ a three dimensional pseudo-hermitian CR manifold that is spherical and is compact with no boundary. Let $\tilde{M}$ the universal cover for $M$. Then from \cite{CY} we see one obtains a CR immersion
                              $$\Phi: \tilde{M}\to S^3.$$
This map is called the developing map. Our goal in this section is to establish under certain assumptions that this map is injective. Already in \cite{CY}, $\Phi$ was shown to be injective, under the requirement that the minimal integrability exponent of the Green function at infinity for the conformal sub-laplacian $L_\theta$, which we denote by $s(\tilde{M})$ satisfies $s(\tilde{M})<1$. Here we will prove injectivity under the hypotheses of the main theorem, Theorem 1.1 of \cite{Malchiodi}, which guarantees the positive mass theorem holds. We recall some definitions. First we have the Yamabe constant $Y(M)$ see \cite{Chanillo} and \cite{Malchiodi} eqn. $(1)$. We also have the CR Paneitz operator for which we refer to \cite{Chanillo}, \cite{Malchiodi}. We denote the CR Paneitz operator by $P$. We also need the conformal sub-laplacian,
          $$L_\theta= -4\Delta_b+R_\theta.$$

\begin{theorem}
\label{theorem-2}
Let $\left(M^n, \theta, J\right)$ be a compact pseudo-hermitian manifold. Assume $Y(M)>0$ and $P\geq 0$, that is the CR Paneitz operator is non-negative. Assume that $M$ is spherical. Let $\tilde{M}$ denote the universal cover for $M$. Then the developing map
                     $$\Phi: \tilde{M}\to S^3,$$
is injective.

\end{theorem}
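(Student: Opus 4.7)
The plan is to adapt the Schoen--Yau injectivity argument to the CR setting, with the CR positive mass theorem of Cheng--Malchiodi--Yang (available to us in three dimensions because $Y(M)>0$ and $P\geq 0$, cf.\ Theorem~1.1 of \cite{Malchiodi}) playing the role of the Riemannian positive mass theorem. I would argue by contradiction: assume there are distinct points $p_1,p_2\in\tilde{M}$ with $\Phi(p_1)=\Phi(p_2)=q\in S^3$. Post-composing with a CR automorphism of $S^3$ sending $q$ to the ``north pole'' and then with stereographic CR projection $\sigma\colon S^3\setminus\{q\}\to\mathbb{H}^1$, the composite $\Psi=\sigma\circ\Phi$ becomes a CR local diffeomorphism from $\tilde{M}\setminus\Phi^{-1}(q)$ into the Heisenberg group, sending every $p_i\in\Phi^{-1}(q)$ to infinity.

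Next, I pull back the (Webster scalar-flat) Heisenberg contact form: writing $\Psi^*\theta_{\mathbb{H}^1}=v^{4/(Q-2)}\tilde{\theta}$ with $\tilde{\theta}$ the lift of $\theta$ and $Q=4$, the positive function $v$ satisfies $L_{\tilde{\theta}}v=0$ on $\tilde{M}\setminus\Phi^{-1}(q)$ and blows up like a Green's-function singularity $\rho(x,p_i)^{-(Q-2)}$ at each $p_i$. Since $Y(M)>0$, the operator $L_{\tilde{\theta}}$ is positive; using this together with $P\geq 0$, the Green's function $G_{p_1}$ of $L_{\tilde{\theta}}$ with pole at $p_1$ (constructed via the usual domain-exhaustion argument on $\tilde{M}$) enjoys, in CR normal coordinates at $p_1$, the expansion
\[
G_{p_1}(x)=\rho(x,p_1)^{-(Q-2)}+A+o(1),\qquad A\geq 0,
\]
with $A>0$ unless $(\tilde{M},\tilde{\theta})$ is CR-equivalent to the round $S^3$.

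The heart of the argument is a mass comparison near $p_1$. Normalizing so that $v$ has leading coefficient $1$ at $p_1$ (by an overall positive factor coming from $d\Psi_{p_1}$) and setting $h_1:=v-G_{p_1}$, I expect $h_1$ to extend continuously across $p_1$ and, via the strong maximum principle for $L_{\tilde{\theta}}$ applied to the positive Green's-function singularity of $v$ at $p_2$, to satisfy $h_1(p_1)>0$. Combining,
\[
v(x)=\rho(x,p_1)^{-(Q-2)}+\bigl(A+h_1(p_1)\bigr)+o(1),
\]
so the constant term in the expansion of $v$ at $p_1$ is strictly positive. But this constant term is precisely the pseudo-Hermitian mass of the asymptotically Heisenberg end that $p_1$ determines on $(\tilde{M}\setminus\Phi^{-1}(q),\,v^{4/(Q-2)}\tilde{\theta})$, and by construction $\Psi$ is a CR isometry from a punctured neighborhood of $p_1$ onto a neighborhood of infinity in $\mathbb{H}^1$---making that end \emph{exactly} Heisenberg flat, hence of mass zero. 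The resulting contradiction forces $\Phi$ to be injective.

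The hardest step will be the mass identification in the last paragraph: making precise that the constant term $A+h_1(p_1)$ read from the local expansion at $p_1$ coincides with the intrinsic pseudo-Hermitian mass of the Heisenberg end, and that Cheng--Malchiodi--Yang's mass positivity (proved for compact manifolds) transfers correctly to the Green's function on the possibly non-compact cover $\tilde{M}$. Once these identifications are in place, the positive mass read off $G_{p_1}$ plus the strictly positive contribution from the second singularity $p_2$, set against the vanishing mass of an exactly flat Heisenberg end, closes the proof.
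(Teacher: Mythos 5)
Your proposal is in the Schoen--Yau spirit and reaches the same contradiction, but it takes a genuinely different route from the paper, and the step you flag as ``hardest'' is indeed the one the paper's route is designed to avoid. The paper, following \cite{CY}, forms the \emph{ratio} $v=G_p/\widetilde{G}$ (in your notation this is $G_{p_1}/v$), uses that $0<v\le1$ and that---by conformal invariance of $L$ and the Cayley transform---$v$ is $\bar{\Delta}_b$-harmonic on the Heisenberg model, applies Bony's maximum principle with the comparison function $\rho^{-2}$ on $\{\rho>R\}$ to obtain the upper bound $v(y)\le 1-c_0R^2\rho(y)^{-2}$, and then contradicts the positive-mass expansion $v(y)=1+A\rho(y)^{-2}+O(\rho^{-3})$, $A\ge 0$, which is supplied by Lemmas 4.1 and 5.1 of \cite{CY} together with Theorem 1.1 of \cite{Malchiodi}. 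Passing to the ratio buys you a bounded, flat-harmonic function, so the only mass statement you ever need is the sign of the constant term of $G_{p}$ itself, which is exactly what the cited references establish. Your \emph{difference} argument with $h_1=v-G_{p_1}$ instead requires the additional mass-identification lemma that the constant term of the pulled-back flat conformal factor $v$ in CR normal coordinates at $p_1$ equals the pseudohermitian mass of the exactly Heisenberg end (hence vanishes); you would need to prove a CR analogue of the Lee--Parker-type expansion for $v$ and check its compatibility with the choice of asymptotically flat coordinate system. You also quietly use $v\ge G_{p_1}$ (minimality of the Green function against the positive supersolution $v$) before the strong maximum principle can give $h_1(p_1)>0$; that is a genuine but standard step that should be stated. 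With those two points filled in, your contradiction $0=B=A+h_1(p_1)>0$ closes the argument, so the proposal is sound in outline, but it trades the paper's clean maximum-principle argument at infinity for an extra expansion lemma at the pole.
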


\noindent
\textbf{Remark:}
\noindent
The hypotheses in our theorem above assures us as a consequence of the main theorem, Theorem 1.1 in \cite{Malchiodi}, that the CR mass is non-negative. We shall denote the CR mass by $A$ in what follows.

\begin{proof}[Proof of \cref{theorem-2}]For any $p\in \tilde{M}$, let $G_p$ denote the minimal positive Green function with pole at $p$ for the CR invariant sub-laplacian $L_\theta$. This Green function is constructed in Theorem 3.2 in \cite{CY}. Next we pull back the Green function from the sphere $S^3$ onto $\tilde{M}$, that is we form
                $$\tilde{G}=|\Phi(p)|^{-3}|\Phi^\prime|H_y\circ \Phi,$$
where $y=\Phi(p)$ and $H_y$ is the Green function for the CR invariant sub-laplacian $L_0$ associated to the standard contact form on $S^3$. We could use the Cayley map, from $S^3$ to the three dimensional Heisenberg group and even pull back the Green function from the Heisenberg group to $\tilde{M}$, see \cite{CY}. The poles of $\tilde{G}$ lie in the set $\Phi^{-1}(y)$ and thus $p$ is one among many poles. Thus to prove $\Phi$ is injective we need to show $\Phi^{-1}(y)=p$. To establish this, we proceed using the strategy in \cite{CY},\cite{SYdevelop} and in particular Theorem 3.5 in \cite{Schoenbook}. Thus we form
               $$v= \frac{G_p}{\tilde{G}}.$$
From \cite{CY} we note $0<v\leq 1$ and $v$ is smooth. Transferring to the Heisenberg group using the Cayley map, we note by $(4.3)$ of \cite{CY} that
                 $$\bar{\Delta_b}v=0,$$
where $\bar{\Delta_b}$ is the sub-laplacian on the Heisenberg group. The Cayley map is used to establish this fact in \cite{CY}, pg. 200. Thus $v$ is a harmonic function on the Heisenberg group to which Bony's maximal principle applies. We work in asymptotically flat coordinates, see \cite{CY} and \cite{Malchiodi}. We use the notation $y=(z,t)$ for points on the Heisenberg group and set
$\rho(y)=\rho(z,t)=\left(|z|^4+t^2\right)^{\frac{1}{4}}$ to be the metric on the Heisenberg group. By contradiction assume $v<1$. Since $\rho^{-2}$ is a solution to $\bar{\Delta_b}$ outside $y=0$, by the maximum principle we get for $y$ outside a ball $B_R(0)$ centered at the origin $(0,0)$ of the Heisenberg group
       $$1-v(y)\geq \left(\min_{\partial B_R(0)} (1-v)\right)R^2\rho\left(y\right)^{-2}.$$
Thus,
              $$v(y)\leq 1-c_0R^2\rho\left(y\right)^{-2},\quad c_0>0.$$
But this contradicts the positive mass theorem. In particular Lemma $4.1$ and Lemma $5.1$ in \cite{CY} and the positivity of the mass $A$ which follows from the main Theorem $1.1$ of \cite{Malchiodi} yields with $A> 0$,
                 $$v(y)=1+A\rho(y)^{-2}+O\left(\rho(y)^{-3}\right).$$
This ends the proof.
\end{proof}

\bibliographystyle{plain}
\bibliography{bib.bib}
\end{document}